\newcommand{\supp}{\operatorname{supp}}
\newcommand{\aaa}{c}
\patchcmd{\thebibliography}{\section*{\refname}}{}{}{}
\DeclareMathOperator{\R}{\mathbb{R}}
\DeclareMathOperator{\N}{\mathbb{N}}
\newtheorem{thm}{Theorem}
\newtheorem{Theorem}{Theorem}
\newtheorem{lem}{Lemma}[section]
\newtheorem{Lemma}[lem]{Lemma}
\newtheorem{Definition}{Definition}
\newtheorem{Proposition}[lem]{Proposition}
\newtheorem*{rem*}{Remark}
\begin{document}

\newtheorem{conj}{Conjecture}
\newtheorem{prob}{Open Problem}

\title{A century of metric Diophantine approximation\\ and half a decade since Koukoulopoulos--Maynard}

\author{Manuel Hauke}
\date{}

\maketitle

\begin{abstract}
    In this note, we review the history of Khintchine's Theorem which is the foundation of metric Diophantine approximation, and discuss several generalizations and recent breakthroughs in this area. We focus particularly on the direction of the Duffin--Schaeffer Conjecture which was spectacularly proven in 2020. We present some simplified key ideas of the proof that can also be applied in various other areas of number theory.
\end{abstract}

\section{A century of Khintchine's Theorem in metric Diophantine approximation}\label{intro}
The original aim of Diophantine approximation is to provide good rational approximations to irrational numbers and to estimate the error made by this approximation. The history of this area goes back to ancient times when engineers tried to approximate $\pi$ by rational approximations such as $22/7$. Although quite classical in its nature, the area of Diophantine approximation is a lively area of modern interest; it is not only a very active field in pure mathematics with spectacular breakthroughs in recent years such as \cite{EKL6,KMDS}, but it is also of great importance in computer sciences: Since a computer has only a finite memory capacity, a precise numerical evaluation of an irrational number is impossible, and a rational approximation has to be considered - the smaller the denominator of the approximating rational number, the less memory needs to be allocated. The larger the denominator, the better the approximation we can expect, so there is a trade-off between memory allocation and rounding error made, and optimal solution(s) to this problem can be found quite efficiently by using the theory of Diophantine approximation.\\

To state the problem described above in a mathematical manner, we consider an approximation function $\psi: \N \to [0,\infty)$ and for given real (typically irrational) $\alpha$, one asks for rationals $p/q \in \mathbb{Q}$ such that
\[\left\lvert\alpha - \frac{p}{q}\right\rvert \leq \frac{\psi(q)}{q}.\] The function $\psi$ can be seen as a penalizing factor that tries to avoid large denominators unless the rate of approximation turns out to be much better when using a larger denominator.\\
While the name is derived from the ancient Greek mathematician Diophantus of Alexandria (ca. 3th century) who was one of the first to consider related questions, the origin of Diophantine approximation in the sense we understand it today is Dirichlet's
Theorem (ca. 1830). The Theorem states (in its non-uniform variant, sometimes also referred to as \textit{Dirichlet's Corollary}) that for every real number $\alpha$, we find infinitely many rational numbers $p/q$ that satisfy $\left\vert\alpha - \frac{p}{q}\right\vert \leq \frac{1}{q^2},$ i.e. we can choose $\psi(q) = \frac{1}{q}$ in the above framework. Although the proof is elementary and concise, this approximation rate is, up to a constant, optimal when considering arbitrary irrational numbers. Numbers where Dirichlet's Theorem is sharp are called \textit{badly approximable numbers} and include prominent examples such as the Golden Ratio $\Phi = \frac{1 + \sqrt{5}}{2}, \sqrt{2}$, or in general, all irrational numbers that arise as an irrational real solution of a quadratic polynomial equation with rational coefficients.\\

However, not all irrational numbers are badly approximable, with one prominent example being Euler's number $e$. In fact, ``most'' numbers are not badly approximable, i.e. drawing a number uniformly at random from $[0,1)$\footnote{We can restrict our analysis without loss of generality to $\alpha \in [0,1)$ since the integer part of a real number does not affect the Diophantine properties of $\alpha$.}, we expect with full probability to beat Dirichlet's approximation by a little more than a factor of $\log q$. This was beautifully established a centenary ago by Khintchine, and the statement is considered today as the starting point of the theory on \textit{metric} Diophantine approximation:\\

\noindent {\bf Khintchine's Theorem (1924):} {\it
Let $\psi: \N \to [0,\infty)$ be monotonically decreasing.
Writing
\[ A(\psi) := \left\{\alpha \in [0,1): \left\lvert \alpha - \frac{p}{q} \right\rvert \leq \frac{\psi(q)}{q} \text{ for infinitely many } (p,q) \in \mathbb{Z} \times \N \right\},
\]
we have that

\[
\lambda(A(\psi))= \begin{cases}
0 &\text { if }  \sum\limits_{q \in \N} \psi(q) < \infty,\\
    1 &\text{ if } \sum\limits_{q \in \N}\psi(q) = \infty,
\end{cases}
\]
where $\lambda$ denotes the ($1$-dimensional) Lebesgue measure.}\\

We remark that this theorem was established during a time when axiomatic probability theory was about to be introduced by Kolmogorov and further developed by (the very same) Khintchine. Khintchine's Theorem can be seen from (at least) three different perspectives:\\

\begin{itemize}
    \item 

In probability-theoretic language, we have a probability space $\Omega = [0,1)$ with probability measure $\mathbb{P} = \lambda_{[0,1)}$, events $E_q := \bigcup_{p \in \mathbb{Z}}\left\{\alpha \in [0,1): \left\lvert \alpha - \tfrac{p}{q} \right\rvert \leq \tfrac{\psi(q)}{q}\right\}$, and want to determine $\mathbb{P}[\limsup_{q \to \infty}{E_q}]$.\\

\item 
From a dynamics point of view, we are given the one-dimensional torus $\mathbb{T} = \mathbb{R}/\mathbb{Z} \cong [0,1)$, and want to understand at which speed the dynamical system
$T_{\alpha}^n(0)$ driven by the irrational rotation $T_{\alpha}(x) := x + \alpha \pmod 1$ approaches $0$. Note that $\lambda$ is the probability measure that makes $T_{\alpha}$ a measure-preserving map.\\

\item  Since the Lebesgue measure intuitively can be seen as the correct way of depicting to choose a number ``randomly'', the (number-theoretic) interpretation is the above-mentioned improvement rate over Dirichlet's Theorem for ``typical'' $\alpha$.\\
\end{itemize}

Of course, these are just different perspectives and clearly ``three sides of the same coin''.
However, for the proof respectively generalizations below, it can be beneficial to view these problems from one perspective or another.
We remark on the similarity to Borel's result on normal numbers \cite{B09} where Borel proved that ``almost every number is normal in any integer base''. There, Borel was strictly separating the probabilistic and number-theoretic points of view by seeing them as two separate \textit{statements}, but with the axiomatic probability theory being introduced about two decades later, the mathematical community realized that these are just two \textit{interpretations} of the same statement.

\subsection{Generalizations of Khintchine's Theorem}

Since the development of Khintchine's Theorem, researchers tried to generalize the occurring phenomenon in various setups. Some natural considerations are the following, with the possibility of connecting various generalizations:

\begin{enumerate}
    \item What happens when we replace the Lebesgue measure with some other probability measure $\mu$ that is supported on $[0,1)$? As long as there is no good reason for this measure to be ``maliciously chosen'', the general philosophy is that Khintchine's Theorem should still hold true with $\mu$ in place of the Lebesgue measure. It is obvious that for measures $\mu$ that are absolutely continuous with respect to the Lebesgue measure will still satisfy Khintchine's Theorem (since sets of measure zero remain sets of measure zero), so for non-trivial questions, we have to look for different measures. Note that we can restrict our analysis also to non-atomic measures since otherwise, Khintchine's Theorem will be automatically wrong. A class of measures that gained a lot of interest, especially in recent years are the uniform measures on fractals such as missing-digit sets like the middle-third Cantor set, following a question raised by Mahler \cite{M84}. There have been various impressive results in the last few years (e.g. \cite{CVY24,DJ24,KL23,Y21}) with the question being recently settled in another breakthrough result \cite{BHZ24}.
    The proof methods are mainly tools from homogeneous dynamics (combined with harmonic analysis), a field of research that is closely connected to (classical) Diophantine approximation.
    \item Closely related to the above is the study of Khintchine's Theorem on manifolds: Taken a manifold $\mathcal{M} \subseteq \R^n$ that (at least locally) can be described as $F(U)$ where $U \subseteq \R^d$ is an open set and $F$ is a reasonably smooth map, we use the normalized Lebesgue measure on $U$ to define a probability measure $\mu$ on $\mathcal{M}$, and ask about a Khintchine Theorem with respect to $\mu$ on the manifold.

    This turns out to be a challenging problem: While algebraic solutions can help to understand rational points \textit{on} manifolds, the theory of Diophantine approximation on manifolds is closely related to counting points \textit{near} manifolds, making it naturally a question of an analytic flavour. For some breakthrough results in this area, we refer to e.g. \cite{BDV07,BY23,KM98,SST23,VV06}.\\
    We remark that although Diophantine approximation on manifolds might at first sight look like a purely mathematical problem with barely any real-world applications, there is the possibility to apply these concepts in wireless communication - we refer the interested reader to the gentle introduction in \cite{BV20}.
    
    \item Introducing an inhomogeneous parameter: Instead of asking for $\left\lvert \alpha - \frac{p}{q} \right\rvert \leq \frac{\psi(q)}{q}$, we ask for infinitely many solutions to \[\left\lvert \alpha - \frac{p + \gamma}{q} \right\rvert \leq \frac{\psi(q)}{q},\]
    where $\gamma \neq 0$ is an arbitrary fixed real number. The inhomogeneous generalization of Khintchine's Theorem was proven by Sz\"usz in 1958 \cite{S58}, but this is still a very active area of research (for some recent advancements see e.g. \cite{BHV24,CT24,Y_inhom}). From a dynamical point of view, inhomogeneous Diophantine approximation means we examine the visits of a neighbourhood of zero of $T_{\alpha}^n(\gamma)$ in place of $T_{\alpha}^n(0)$. While na\"{i}vely one might expect this to not make any difference, in some cases this parameter complicates matters: For the homogeneous case, the classical theory of continued fractions, i.e. the representation of $\alpha$ as 
    \[\alpha = \frac{1}{a_1 + \frac{1}{a_2 + \frac{1}{a_3 + \ldots}}}, \quad a_i \in \mathbb{N},\]
    can be successfully used to prove Khintchine's Theorem and various generalizations. Since there is no direct analogue to continued fractions known for inhomogeneous approximations, one has to appeal to other methods.\\
    
    One can even try to push the game and ask for a ``moving target'', i.e. we take a different $\gamma$ for each $q$ and ask about $\left\lvert \alpha - \frac{p + \gamma_q}{q} \right\rvert \leq \frac{\psi(q)}{q}$ for infinitely many $q$. One of these cases arises when we try to connect different dynamical systems, such as e.g.
    $T_{\alpha}^n(T_{\beta}^{f(n)}(0))$ where say $\beta$ is fixed and $f: \N \to \N$ is an arbitrary function. While for specific dynamical systems, an exploration might yield a Khintchine phenomenon, Khintchine's Theorem with an arbitrary moving target is still open \cite{HR24}.\\
    \item Taking the probability-theoretic point of view again, Khintchine's Theorem is a statement on the measure of the $\limsup$-set of a sequence of events. If the events were a family of independent events, the statement would follow from a simple application of the Borel--Cantelli Lemma. However, these sets do not satisfy this independence condition, but the original Borel--Cantelli Lemma was over the years refined to allow some weak form of dependence that still allows us to deduce the $0-1$-result. On a more technical note (see Section \ref{var_as} for details), one tries to control the degree of dependence by getting upper bounds on the variance, i.e. \begin{equation}\label{variance_def}\sum_{q,r \leq Q} \lambda(E_q \cap E_r) - \sum_{q \leq Q}\lambda(E_q), \quad \text{ when } Q \to \infty.\end{equation} While for perfect pairwise independence, only the diagonal ($q = r$) gives a non-zero contribution, refined versions of the Borel--Cantelli-Lemma allow (much) weaker bounds for \eqref{variance_def} to be sufficient in order to deduce the $0-1$-result. Researchers tried over the years to strengthen bounds on the variance of the sets in Khintchine's Theorem, and examined the degree of dependence of the events $(E_q)_{q \in \N}$, in the hope of obtaining ``sufficient independence'' to establish other classical results of probability theory that are known to hold in the case of independent events\footnote{Of course, even in the independent case, there are restrictions imposed on the corresponding measures of the events in order to satisfy some of the mentioned statements - we will omit these details here for the sake of conciseness.}. In particular, the following questions have been studied in detail:
    \begin{itemize}
        \item Can we establish a ``Strong law of large numbers'', i.e. do we have for (Lebesgue-) almost every $\alpha \in [0,1)$ that \begin{equation}\label{asympototic_form}\sum_{q \leq Q} \mathds{1}_{E_q}(\alpha) \sim \sum_{q \leq Q}\lambda(E_q), \quad \text {as } Q \to \infty?\end{equation}
        If this holds true, what can we say about the error term (i.e. the difference) arising? The first who established \eqref{asympototic_form} for Khintchine's Theorem was Schmidt \cite{S60}, and this result has since then been adapted to various related setups, see e.g.\cite{ABH23,AG22,CT22,DKL05,H23,P24,S64}.
        \item Can we find a Central Limit Theorem, i.e., do we have
        \[\lambda\left(\left\{\alpha \in [0,1): \frac{\sum_{q \leq Q} \mathds{1}_{E_q}(\alpha) - \sum_{q \leq Q}\lambda(E_q)}{
\sqrt{\sum_{q,r \leq Q} \lambda(E_q \cap E_r) - \sum_{q \leq Q}\lambda(E_q)}}
        \right\}\right) \stackrel{w}{\longrightarrow} \mathcal{N}(0,1) \text{ as } Q \to \infty?\]
        This was proven by \cite{F03,L_I,L_II} for Khintchine's Theorem (see also \cite{BG19} for a more general approach) under some regularity assumptions on $\psi$.
        \item Do we have a law of the iterated logarithm? This was partially established by Fuchs \cite{F04}, by demanding some further assumptions on $\psi$.
    \end{itemize}
    \item Removing monotonicity:  Although having a monotonicity assumption in $\psi$ for Khintchine's Theorem seems natural from the point of view of ``penalizing'' large denominators, there are, among other reasons, two interesting settings where dealing with a non-monotonic $\psi$ is unavoidable:
\begin{itemize}
    \item Multiplicative Diophantine approximation: 
    Motivated by the famously open conjecture of Littlewood, a natural question is asking about having infinitely many solutions to\footnote{Here and in what follows, $\lVert . \rVert$ stands for the distance to the nearest integer.} 
    \begin{equation}\label{mult_DA}q \lVert q \beta \rVert \lVert q \alpha \rVert \leq \theta(q)\end{equation} for some $\theta: \N \to [0,\infty)$ (which itself can be thought of as being decreasing). Littlewood's Conjecture, which claims that for all $\varepsilon > 0$ and $\theta(q) = \varepsilon$, \eqref{mult_DA} admits infinitely many solutions, is known to be true (by quite some margin) for almost every pair $(\alpha,\beta)$ \cite{G62}, and even the Hausdorff dimension of exceptional pairs is known to be zero \cite{EKL6}. However, many questions (such as several of the probabilistic refinements described above) remain unsolved.\\
    Note that we can rewrite the inequality in the multiplicative setup (under the natural irrationality assumption on $\beta$) by
    $\lVert q \alpha \rVert \leq \frac{\theta(q)}{q\lVert q \beta \rVert} =: \psi_{\beta}(q)$, an approach successfully applied in e.g. \cite{CT22,CT24,FH24}.
    Clearly, $\psi_{\beta}(q)$ is typically a non-monotonic function, and thus, Khintchine's Theorem cannot be applied. 
    \item Studying Diophantine approximation with a restriction on the allowed denominators: Here, we consider an increasing integer sequence $(a_n)_{n \in \mathbb{N}}$ and suppose that we only want to allow approximations with denominators coming from this sequence. For real-world applications, such a restriction can come for example from numbers that one can store efficiently in a computer (e.g. numbers whose binary expansion has only a few non-zero entries). From a purely mathematical point of view, the question translates to the metric behaviour of $(a_n\alpha)_{n \in \N}$ mod $1$, an area that has a very rich history in number theory, and was considered already by many illustrious researchers, such as Erd\H{o}s \cite{E49,E64}, Hardy and Littlewood \cite{HL46}, Weyl \cite{W16} and numerous others.
    Here, $\psi(q) := \mathds{1}_{[q \in (a_n)_{n \in \N}]}\theta(q)$ for some $\theta: \N \to [0,\infty)$ (which itself can be thought of as being monotonically decreasing) provides the according model of a non-monotonic $\psi$.\\
\end{itemize}
\end{enumerate}

While one could write book series about each of the above-mentioned generalizations, we will focus in the remainder of this article on the question of removing the monotonicity assumption from Khintchine's Theorem.
In their seminal work. Duffin and Schaeffer \cite[1941]{DS41} showed that the monotonicity assumption in Khintchine's Theorem is necessary. The Counterexample made heavily use of the trivial fact that a rational number has various distinct representations as fractions of integers, e.g. \[1/2 = 2/4 = 3/6 = \ldots.\] The consequence of this trivial observation is that if $\alpha$ is very close to 
$\frac{a}{b}$, then it is also close to
$\frac{am}{bm}$ for every $m \in \N$. If the values of $\psi$ are now distributed in a way that their values are particularly big on multiples of $b$, then this messes up the usually predicted ``random'' behaviour that occurs when the values of $\psi$ are evenly distributed among the integers (which follows from the monotonicity assumption on $\psi$). In order to circumvent this, Duffin and Schaeffer suggested to count every fraction only once in its reduced form, i.e. we consider 
the question of whether the equation 
\[\left\lvert \alpha - \frac{p}{q} \right\rvert \leq \frac{\psi(q)}{q}, \quad \gcd(p,q) = 1\] is satisfied infinitely often. Counting arguments (see Section \ref{var_as}) straightforwardly imply that the criterion for full or empty measure now should rather be whether or not 
$\sum\limits_{q \in \N} \frac{\psi(q)\varphi(q)}{q} < \infty$, where $\varphi$ denotes the Euler totient function. The corresponding Conjecture was open for 78 years and after partial, but yet important contributions over the years such as 
\cite{ALMZ,BHHV,BV_Mass,E70,G61,PV90}, was finally resolved in a highly celebrated breakthrough article by Koukoulopoulos and Maynard \cite{KMDS} in 2020, which is seen as one of the reasons J. Maynard was awarded with the Fields Medal in 2022.
\\

\begin{thm}[Duffin--Schaeffer Conjecture 1942/Koukoulopoulos--Maynard Theorem 2020]
    Let $\psi: \N \to [0,\infty)$ be an arbitrary function and let
    \[A'(\psi) :=
\left\{\alpha \in [0,1): \left\lvert \alpha - \frac{p}{q} \right\rvert \leq \frac{\psi(q)}{q}, \gcd(p,q) = 1 \text{ for infinitely many } p,q \in \mathbb{N}\right\}.\]

Then we have that
\[
\lambda\left(A'(\psi)\right) = \begin{cases}
0 &\text { if }  \sum\limits_{q \in \N} \left(\frac{\varphi(q)\psi(q)}{q}\right) < \infty,\\
    1 &\text{ if } \sum\limits_{q \in \N} \left(\frac{\varphi(q)\psi(q)}{q}\right) = \infty.
\end{cases}
\]
\end{thm}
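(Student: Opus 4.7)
I would treat the two directions separately. For convergence, setting $E_q := \bigcup_{\gcd(p,q) = 1,\, 0 \le p \le q} [p/q - \psi(q)/q,\, p/q + \psi(q)/q]$ one has $\lambda(E_q) \le 2\varphi(q)\psi(q)/q$, and the first Borel--Cantelli Lemma immediately gives $\lambda(A'(\psi)) = 0$. For divergence the real work begins. I would first invoke Gallagher's $0$--$1$ law, which asserts $\lambda(A'(\psi)) \in \{0,1\}$ for every $\psi$, so it suffices to prove positive measure. After a standard truncation (e.g.\ replacing $\psi(q)$ by $\min(\psi(q), 1/(2q))$, which preserves divergence of the series in question) the arcs making up $E_q$ are pairwise disjoint and $\lambda(E_q) \asymp \varphi(q)\psi(q)/q$. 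Positivity of $\lambda(\limsup E_q)$ then follows from the Chung--Erd\H{o}s inequality provided one can establish
\[
\sum_{q, r \le Q} \lambda(E_q \cap E_r) \ll \Bigl( \sum_{q \le Q} \lambda(E_q) \Bigr)^2
\]
for infinitely many $Q$, reducing everything to pairwise overlap estimates.

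The classical tool is the Pollington--Vaughan overlap estimate, which roughly says that either the two unions of arcs are essentially disjoint, or $\lambda(E_q \cap E_r) \ll \lambda(E_q)\lambda(E_r)$ times a \emph{local defect factor} of the shape $\prod_p (1 + 1/p)$ ranging over certain primes dividing the ``mismatch'' $qr/\gcd(q,r)^2$. When this defect is bounded, the pair $(q,r)$ behaves quasi-independently and contributes harmlessly to the second moment. The danger lies in pathological pairs where $q$ and $r$ share many small common prime factors, forcing the defect to explode; if $\psi$ is supported on an exceedingly smooth set of integers, such pairs could in principle dominate the sum and defeat a straightforward second-moment argument.

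The main obstacle, and the genuine contribution of Koukoulopoulos--Maynard, is a combinatorial device for taming precisely these anomalous pairs. One encodes the offending pairs as edges of a \emph{GCD graph}, whose vertices are ``problematic'' denominators and whose edges are decorated with auxiliary data pinning down residue classes modulo shared primes. The crucial step is an iterative densification theorem: any GCD graph carrying too many anomalous edges must contain a denser subgraph with an even more rigid shared-prime profile, and iterating this extraction a bounded number of times produces a configuration so arithmetically constrained that the associated total Lebesgue mass violates divergence of $\sum \varphi(q)\psi(q)/q$. Defining the correct notion of ``quality'' and vertex/edge weights on these graphs, and showing that each extraction step strictly improves this quality while keeping edge density under control, is the technically hardest part; once this combinatorial machine is in hand, converting its output back into an overlap bound of the form required above and plugging it into Chung--Erd\H{o}s is, by comparison, largely mechanical bookkeeping.
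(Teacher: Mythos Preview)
Your outline is essentially correct and tracks the original Koukoulopoulos--Maynard argument. One small slip: the truncation $\psi(q) \mapsto \min(\psi(q), 1/(2q))$ does \emph{not} in general preserve divergence of $\sum_q \varphi(q)\psi(q)/q$ (take $\psi$ large on a very sparse set, so that after truncation the series is dominated by a convergent sum); the standard reduction is to $\min(\psi(q), 1/2)$, with the residual case ``$\psi(q) > 1/2$ infinitely often'' disposed of separately by the short argument of Pollington--Vaughan.

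Where your route diverges from the paper is in the combinatorial core. You describe the original \cite{KMDS} machinery: GCD graphs decorated with prime data, an ad-hoc quality functional, and an iterative densification that increments quality until the resulting structure is arithmetically untenable. The paper instead presents the later simplification of \cite{HSW}: one first \emph{generalises} the key estimate (Theorem~\ref{Theorem:maintheorem}, which implies Proposition~\ref{Proposition:Prop5.4}) to asymmetric bilinear weights $\mu_\psi(v)\mu_\theta(w)$, and then runs a minimal-counterexample argument in the number of primes appearing in the supports. Peeling off a single prime and invoking minimality yields a bilinear concentration bound (Lemma~\ref{Lemma:bilinearbound}) that forces most of the mass onto pairs $(v,w)$ with $\lvert \nu_p(v/N)\rvert + \lvert \nu_p(w/N)\rvert \le 1$ for a single anchor integer $N$; two short anatomy-of-integers lemmas then rule this configuration out. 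The gain of the paper's approach is brevity and the elimination of the somewhat opaque quality functional; the cost is that one must prove the stronger asymmetric statement so that the induction closes. Your approach is the historically original one and is equally valid.
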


The above-mentioned counterexample of Duffin and Schaeffer \cite{DS41} already hints at a connection to asking about greatest common divisors and related number-theoretic concepts. 
On a more technical note (see the proof sketch in Section \ref{sec_proof_sketch}), looking at the variance estimate reveals that the ``enemy'' for the solution of the Duffin--Schaeffer Conjecture are functions $\psi$ where the support lies on numbers that share unusually often a large GCD. More precisely, a critical example will have an unusually large (weighted) GCD sum
\[\sum_{q,r \leq Q}\frac{\varphi(q)\psi(q)}{q}\frac{\varphi(r)\psi(r)}{r}\frac{\gcd(q,r)}{\sqrt{qr}}.\]

We refer the interested reader to the excellent monograph of \cite[Chapter 3]{H98} for a more detailed connection between GCD sums and Duffin--Schaeffer-type questions.\\

Therefore, the problem (and finally the solution) attracted rather researchers from analytic number theory than researchers working on questions of Diophantine approximation with tools from homogeneous dynamics. The reason for this is that the usually applied dynamical tools are mostly incompatible with the concept of non-monotonic approximation functions.

\section{A proof sketch for the Duffin--Schaeffer conjecture}\label{sec_proof_sketch}

In the second part of this note, we provide a simplified proof sketch for the Duffin--Schaeffer Conjecture. We remark that is impossible to mention all articles that have contributed to the final resolution of the conjecture. In this sketch, the main concepts will come from Pollington and Vaughan \cite{PV90} respectively Kouloulopoulos and Maynard \cite{KMDS}. However, we present the recent 
proof given by the author with Vazquez and Walker \cite{HSW}
that simplified, shortened, and strengthened the original proof of Kouloulopoulos and Maynard\footnote{This should not take anything away from the original proof. On the contrary, we believe that it is the duty of the mathematical community to streamline and simplify the proofs of important results as much as possible in order to make it understandable for as many researchers as possible, and should therefore rather highlight the author's admiration of the ingenious proof strategy developed by Kouloulopoulos and Maynard.}. This approach builds on the work of Green and Walker \cite{GW21} which solved a problem in combinatorial number theory on the number of pairs with large GCD.\\

For simplicity, we will assume that $\psi: \N \to [0,1/2]$. The case where $\psi(q) > 1/2$ for infinitely many $q$ has been resolved by Pollington--Vaughan in 1990 \cite{PV90} by much shorter arguments. Further, we will assume that $\psi$ is only supported on square-free numbers. While the proof for the general case has some technical tweaks to include the non-squarefree case, the key ideas and concepts are still contained in this  sketch under this simplifying assumption.\\
    
The proof is split into the following parts:\\

\begin{enumerate}
    \item \textbf{From variance estimate to almost sure}:\\
    In this part, we will show that the problem can, analogously to Khintchine's Theorem, be translated into a question on the Lebesgue measure of $\limsup_{q \to \infty} A_q$ for some particular measurable sets $A_q$. We will further show that the Duffin--Schaeffer Conjecture can be reduced to the question of proving the variance estimate
    \begin{equation}\label{QIA}\exists\, C > 1:\forall 0 < Y < X: \sum_{X \leq q \leq Y}\lambda(A_q) \in [1,2] \implies \sum_{X < q < Y}\lambda(A_q \cap A_r) \leq C\cdot\left(\sum_{X < q < Y}\lambda(A_q)\right)^2.\end{equation}
    This can be shown by a combination of Borel--Cantelli-type Lemmas with a $0-1$-law established in 1961 \cite{G61}, and therefore is a question of measure/probability theory (combined with ergodic theory), and does not contain the particularly difficult part of the proof. We refer for details to Section \ref{var_as}.\\
    
    \item \textbf{The overlap estimate - application of sieve theory}:
    
    This part deals with upper bounds on $\frac{\lambda(A_q \cap A_r)}{\lambda(A_q)\lambda(A_r)}$ - if this quantity was bounded by a constant $C$ for all pairs $q \neq r$, this would immediately show \eqref{QIA}. However, it will turn out to be too much to ask for. In this section, most of the work has been developed by Erd\H{o}s \cite{E70} respectively Pollington and Vaughan \cite{PV90}. This part of the proof uses, besides elementary arguments, mainly sieve theory, which is a classical topic in analytic number theory. On a more technical note, the mentioned authors used upper-bound sieves that allow to lose a (multiplicative) constant factor - which is negligible in view of \eqref{QIA}. Such sieves are way more flexible and allow a much shorter sieve range in comparison to sieves that are pushing towards an asymptotic. For the quantitative versions of the Duffin--Schaeffer Conjecture in the sense of \eqref{asympototic_form} subsequently proven \cite{ABH23,HSW,KMY24}, new sieve estimates had to be developed. \\
    \item \textbf{The combinatorial part - the non-existence of a set with only large GCDs}:\\
    This is the heart of the proof and the key to the breakthrough result of Koukoulopolus--Maynard. It will be stated below as Proposition \ref{Proposition:Prop5.4} when the corresponding quantities are defined. This part of the proof  (see Section \ref{comb_part}) can be seen as a result in combinatorial number theory. The proof of \eqref{QIA} and consequently the Duffin--Schaeffer Conjecture follows from Proposition \ref{Proposition:Prop5.4} quite straightforwardly after a ``dyadic splitting'' (see Section \ref{DS_from_prop}).

\subsection{From variance estimate to almost sure}\label{var_as}

The Duffin--Schaeffer Conjecture makes (as mentioned for Khintchine's Theorem on p.\pageref{variance_def}) an assertion about the measure of the limsup-set
$\limsup_{q \to \infty}A_q$ for a sequence of events $(A_q)_{q \in \N}$. In contrast to the set system $(E_q)_{q \in \N}$ that underlies Khintchine's Theorem, the additional coprimality assumption shows that the sets $A_q$ are given by 

\begin{equation}\label{def_Aq}A_q = \bigcup_{\substack{0 \leq a \leq q - 1\\\gcd(a,q) = 1}}\left[\frac{a}{q} - \frac{\psi(q)}{q}, \frac{a}{q} + \frac{\psi(q)}{q}\right].\end{equation}

Computing the Lebesgue measure of these sets (which can be seen as computing the expected value with respect to the probability measure $\mathbb{P} = \lambda_{[0,1]}$), we obtain (recall we assume $\psi(q) \leq 1/2$ throughout the article)
\[\lambda(A_q) = \sum_{\substack{0 \leq a \leq q - 1\\\gcd(a,q) = 1}} \frac{2\psi(q)}{q} = 2  \frac{\varphi(q)\psi(q)}{q}.\]

Hence if $\sum_{q \in \N } \frac{\varphi(q)\psi(q)}{q} < \infty$, then $\sum_{q \in \N }\lambda(A_q) < \infty$, so by the convergence Borel--Cantelli Lemma, it follows immediately that if
$\lambda(\limsup_{q \to \infty} A_q) = 0$, which proves the convergence part of the Duffin--Schaeffer Conjecture immediately.\\

Hence we are left to prove that
\[\sum_{q \in \N } \frac{\varphi(q)\psi(q)}{q} = \infty \implies
\lambda(\limsup_{q \to \infty} A_q) = 1.\] As described in Section \ref{intro} for Khintchine's Theorem, we would ideally like to apply the Divergence Borel--Cantelli Lemma, but since the events $(A_q)_{q \in \N}$ are not mutually independent, we cannot do that. The assumption of mutual independence however can be weakened by only assuming ``quasi-independence on average'' in the following sense:\\

\begin{Lemma}[Refined divergence Borel--Cantelli Lemma]\label{dbc}
 \! Let $(X,\mathcal{A},\mu)$ be a probability space  and let $\{E_i\}_{i \in \N} $ be a sequence of subsets in $\mathcal{A}$.
Suppose that
\begin{equation}\label{eqn01}
\sum_{i=1}^\infty \mu(E_i)=\infty
\end{equation}
and that there exists a constant $C>0$ such that
\begin{equation}\label{eqn02}
\sum_{s,t=1}^Q  \mu(E_s\cap E_t)\le C\left(\sum_{s=1}^Q  \mu(E_s)\right)^2\quad\text{for infinitely many $Q\in\N$\,.}
\end{equation}
Then
\begin{equation}\label{eq3}
\mu(E_{\infty}) \ge C^{-1}\,.
\end{equation}
\end{Lemma}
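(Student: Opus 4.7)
My plan is to apply the second-moment method in its Paley--Zygmund form. Set $S_Q := \sum_{i=1}^Q \mathds{1}_{E_i}$, so that $\E[S_Q] = \sum_{i=1}^Q \mu(E_i)$ and $\E[S_Q^2] = \sum_{s,t=1}^Q \mu(E_s\cap E_t)$. Assumption \eqref{eqn02} is then precisely the statement that $\E[S_Q^2] \le C(\E[S_Q])^2$ for infinitely many $Q$, while \eqref{eqn01} guarantees $\E[S_Q]\to\infty$. The key inequality is the one-line Cauchy--Schwarz bound
\[\bigl(\E[S_Q]\bigr)^2 \;=\; \bigl(\E[S_Q\,\mathds{1}_{\{S_Q>0\}}]\bigr)^2 \;\le\; \E[S_Q^2]\cdot \mu(S_Q>0),\]
which immediately gives $\mu(S_Q>0) \ge 1/C$ along the good subsequence of $Q$.

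This already shows $\mu\bigl(\bigcup_{i\ge 1}E_i\bigr) \ge 1/C$, but the statement asks for the limsup set. I would upgrade as follows: for each fixed $M\in\N$ I repeat the argument with the tail partial sum $S_{M,Q} := \sum_{i=M}^{Q}\mathds{1}_{E_i}$. Since $S_{M,Q}\le S_Q$ pointwise, $\E[S_{M,Q}^2] \le \E[S_Q^2] \le C(\E[S_Q])^2$; on the other hand $\E[S_{M,Q}] = \E[S_Q] - \E[S_{M-1}]$, and since $\E[S_{M-1}]$ is a fixed finite constant while $\E[S_Q]\to\infty$, the ratio $(\E[S_{M,Q}])^2/\E[S_{M,Q}^2]$ still tends to at least $1/C$ along the good subsequence. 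Cauchy--Schwarz again yields $\mu\bigl(\bigcup_{i\ge M}E_i\bigr) \ge \mu(S_{M,Q}>0) \ge 1/C - o(1)$, hence $\mu\bigl(\bigcup_{i\ge M}E_i\bigr) \ge 1/C$ for every $M$. Continuity of $\mu$ from above along the decreasing sequence of tail unions then delivers $\mu(E_\infty) \ge 1/C$, where $E_\infty = \bigcap_M \bigcup_{i\ge M}E_i$ is the limsup set.

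There is no serious obstacle: the whole difficulty of the Duffin--Schaeffer Conjecture has been packaged into the hypothesis \eqref{eqn02}, which is the variance/quasi-independence estimate \eqref{QIA} that the subsequent sections are devoted to establishing. The lemma itself is a purely measure-theoretic, soft refinement of the classical Borel--Cantelli lemma. The only minor care needed is that \eqref{eqn02} is assumed only along some infinite sequence of $Q$'s rather than for all large $Q$, which is precisely why, for each $M$, I select a single sufficiently large $Q$ from that sequence to bound $\mu\bigl(\bigcup_{i\ge M}E_i\bigr)$ from below, instead of passing to a clean limit in $Q$ first.
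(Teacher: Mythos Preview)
Your proof is correct and follows exactly the Paley--Zygmund/Cauchy--Schwarz route that the paper itself points to (the paper does not spell out a proof but attributes the argument to Paley--Zygmund and Chung--Erd\H{o}s). The tail-sum upgrade to pass from $\mu\bigl(\bigcup_{i\ge 1}E_i\bigr)\ge 1/C$ to $\mu(E_\infty)\ge 1/C$ is handled cleanly, including the care about the subsequence of good $Q$'s.
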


The ideas for this result date back to articles of Paley and Zygmund in the 1930's \cite{PZ30,PZ32} arising essentially from the usage of the Cauchy--Schwarz inequality, and were made more explicitly by Chung and Erd\H{o}s \cite{CE52} (see \cite{BV23} for a recent survey). Lemma \ref{dbc} weakens the assumption on $(A_q)_{q \in \N}$ from mutual independence to proving \eqref{eqn02} with $C = 1$. While the latter has been established 3 years after the first proof of the Duffin--Schaeffer Conjecture in \cite[Theorem 2]{ABH23}, it considerably simplifies the proof if we showed that it suffices to prove \eqref{eqn02} for some \textit{arbitrary} $C > 1$. The $0-1$-law of Gallagher \cite{G61} for the Duffin--Schaeffer conjecture comes in very handy: It states that regardless of the function $\psi$, we always have for the particular set system \eqref{def_Aq} that
\begin{equation}\label{01}\lambda(\limsup_{q \to \infty}{A_q}) \in \{0,1\}.\end{equation} Together with Lemma \ref{dbc}, this implies that it indeed suffices to show \eqref{eqn02} for some $C > 1$: By \eqref{eq3}, we obtain $\lambda(\limsup_{q \to \infty}{A_q}) \ge \frac{1}{C} > 0$, so by \eqref{01} we immediately deduce $\lambda(\limsup_{q \to \infty}{A_q}) = 1$. We remark that Gallagher's $0-1$-law makes use of the Lebesgue Density Theorem as well as ergodic arguments, and is thus analytically non-trivial and clearly restricted to the usage of Lebesgue measure as well as the particular set system \eqref{def_Aq}. If one wants to apply this strategy to more general systems or prefers a more elementary proof, we refer the reader to \cite{BHV24} where more general versions of the refined divergence Borel--Cantelli-Lemmas are proven. We appeal to the following formulation from \cite[Theorem 6]{BHV24} for the streamlined proof of the Duffin--Schaeffer Conjecture:\\

\begin{thm}\label{T2MS}
Let $\mu$ be a doubling Borel regular probability measure on a metric space $X$. Let $\{E_i\}_{i\in\N}$ be a sequence of $\mu$-measurable subsets of $X$. Suppose that there exist constants $C>0$ and $\aaa>0$ and {a sequence of finite subsets $S_k\subset\mathbb{N}$ such that $\min S_k\to+\infty$ as $k\to\infty$, and such that} 
\begin{equation}\label{eqn04}
\sum_{i\in S_k}\mu(E_i) \ge \aaa
\end{equation}
and
\begin{equation} \label{eqn05}
\sum_{\substack{s<t\\[0.5ex] s,t\in S_k}} \mu\big(E_s\cap E_t \big) \ \le \  C\,  \left(\sum_{i\in S_k}\mu(E_i)\right)^2
\end{equation}

 In addition, suppose that for any $\delta>0$ and any closed ball $B$ centered at $\supp\mu$
\begin{equation}\label{vb89}
\limsup_{\substack{i \to \infty\\ \mu(E_i) > 0}}\frac{\mu\left(B\cap E_i\right)}{\mu\left(B\right)\mu(E_i)}\le 1.
\end{equation}
Then
$\mu(\limsup_{i \to \infty}E_i)=1$.
\end{thm}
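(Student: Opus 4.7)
The plan is to first show $\mu(\limsup_i E_i)>0$ via a Cauchy--Schwarz/Paley--Zygmund argument on each $S_k$, and then upgrade this to full measure by combining the local condition \eqref{vb89} with the Lebesgue density theorem (valid because $\mu$ is doubling and Borel regular on the metric space $X$).

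For the positive-measure step, set $Z_k:=\sum_{i\in S_k}\mathbf{1}_{E_i}$ and $A_k:=\int Z_k\,d\mu=\sum_{i\in S_k}\mu(E_i)$. Expanding $Z_k^2$ and applying \eqref{eqn05} to the off-diagonal yields $\int Z_k^2\,d\mu\le A_k+2CA_k^2$, after which Cauchy--Schwarz gives
\[
\mu\Big(\bigcup_{i\in S_k}E_i\Big)\;\ge\;\frac{A_k^2}{A_k+2CA_k^2}\;\ge\;\frac{\aaa}{1+2C\aaa}\;=:\;\kappa_0\;>\;0,
\]
using \eqref{eqn04} in the last step. Since $\min S_k\to\infty$, every tail $\bigcup_{i\ge N}E_i$ eventually contains $\bigcup_{i\in S_k}E_i$, so $\mu(L)\ge\kappa_0$ where $L:=\limsup_iE_i$.

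To upgrade to $\mu(L)=1$, I argue by contradiction and assume $\mu(X\setminus L)>0$. By the Lebesgue density theorem, $\mu$-a.e.\ $x\in X\setminus L$ is a density point of $X\setminus L$, so that $\mu(B_r(x)\cap L)/\mu(B_r(x))\to 0$ as $r\to 0$. The strategy is to contradict this by rerunning the Cauchy--Schwarz argument \emph{inside} a small ball $B=B_r(x)$ centered at such an $x\in\supp\mu$. For any $\delta>0$, \eqref{vb89} provides $N=N(B,\delta)$ such that $\mu(B\cap E_i)\le(1+\delta)\mu(B)\mu(E_i)$ for all $i\ge N$ with $\mu(E_i)>0$. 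Restricting attention to those $k$ with $\min S_k\ge N$ and applying Cauchy--Schwarz to $Z_k\mathbf{1}_B$, combined with the trivial bound $\mu(B\cap E_s\cap E_t)\le\mu(E_s\cap E_t)$ and \eqref{eqn05}, will produce a positive lower bound on $\mu(B\cap L)$.

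The main obstacle is that Cauchy--Schwarz also needs a matching \emph{lower} bound on $\sum_{i\in S_k}\mu(B\cap E_i)$, which \eqref{vb89} does not directly supply. I would resolve this by exploiting the asymptotic tightness of \eqref{vb89}: embedding $B$ into a finite Vitali-type almost-partition $\{B_j\}_{j\le M}$ of $\supp\mu$ (available from the doubling property), the identity $\sum_{j\le M}\mu(B_j\cap E_i)=\mu(E_i)$ combined with the uniform upper bound $\mu(B_j\cap E_i)\le(1+\delta)\mu(B_j)\mu(E_i)$ forces $\mu(B_{j}\cap E_i)/(\mu(B_{j})\mu(E_i))\to 1$ for each fixed $j$, thereby promoting \eqref{vb89} to a genuine limit. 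Feeding $\sum_{i\in S_k}\mu(B\cap E_i)\sim\mu(B)A_k$ into the local Cauchy--Schwarz estimate and using a careful multi-scale argument---propagating density bounds between comparable balls via the doubling property---closes the localized Paley--Zygmund on arbitrarily small balls and contradicts the vanishing density at $x$, forcing $\mu(L)=1$.
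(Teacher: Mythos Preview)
The paper explicitly omits the proof of this theorem, deferring to \cite{BHV24}, so there is no in-paper argument to compare against; what follows addresses only the soundness of your sketch. Your global Paley--Zygmund step is correct, and the promotion of \eqref{vb89} to a genuine two-sided limit via a finite almost-partition is the right idea. The real gap is in the localized step. Feeding $\sum_{i\in S_k}\mu(B\cap E_i)\sim\mu(B)A_k$ together with the trivial bound $\mu(B\cap E_s\cap E_t)\le\mu(E_s\cap E_t)$ into Cauchy--Schwarz yields only
\[
\mu\Big(B\cap\bigcup_{i\in S_k}E_i\Big)\ \ge\ \frac{\big((1-\eta)\mu(B)A_k\big)^2}{(1+\eta)\mu(B)A_k+2CA_k^2}\ \asymp\ \frac{\mu(B)^2}{2C},
\]
so that $\mu(B\cap L)/\mu(B)\gtrsim\mu(B)\to 0$ as the ball shrinks --- perfectly compatible with $x$ being a density point of $X\setminus L$. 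The doubling property merely compares measures of nested balls by a fixed factor; iterating or summing the $\mu(B)^2$-type bound over sub-balls (say $M$ pieces of equal measure) reproduces the same order and never recovers the missing factor $\mu(B)^{-1}$. Hypothesis \eqref{vb89} controls the local first moment but says nothing about $\mu(B\cap E_s\cap E_t)$, and the unspecified ``multi-scale argument'' is not a substitute for that missing information.

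A route that does work is to localize not to a shrinking ball but to the \emph{fixed} set $F:=X\setminus L$. Approximate $F$ by finitely many disjoint Vitali balls $B_1,\dots,B_M$ and control the remainder via $\int_{F\setminus\bigcup_j B_j}Z_k\,d\mu\le\mu(F\setminus\bigcup_j B_j)^{1/2}\big(\int Z_k^2\,d\mu\big)^{1/2}\ll \varepsilon^{1/2}A_k$, using \eqref{eqn05}; combined with the two-sided ball estimate this gives $\sum_{i\in S_k}\mu(F\cap E_i)\ge(\mu(F)-\varepsilon)A_k$. Now Paley--Zygmund for the conditional measure $\nu:=\mu(\,\cdot\,\cap F)/\mu(F)$ loses only the \emph{fixed} factor $1/\mu(F)$ in the second moment, yielding $\nu\big(\bigcup_{i\in S_k}E_i\big)\ge\kappa_1>0$ uniformly in $k$. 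This contradicts $\mu\big(F\cap\bigcup_{i\ge N}E_i\big)\downarrow\mu(F\cap L)=0$.
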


The proof of Theorem \ref{T2MS} itself is quite elementary, but for the sake of conciseness, will nevertheless be omitted here.
Note that in the case of the set system \eqref{def_Aq}, it follows immediately (by the equidistribution of $\{0 \leq a \leq q-1: a/q : \gcd(a,q) = 1\}$ in $[0,1]$ as $q \to \infty$) that 
\eqref{vb89} is satisfied for $\mu = \lambda_{[0,1]}$, so it suffices to show \eqref{eqn05} for some $C > 1$, avoiding completely the usage of a $0-1$-law.\\
The sets $S_k$ in \ref{T2MS} for the application to our problem will be taken to be of the form of increasing, disjoint integer intervals  $[X,Y]$ with 
$\sum_{X \leq q \leq Y}\lambda(A_i) \in [1,2]$. Therefore, the Duffin--Schaeffer Conjecture follows immediately as soon as we can prove the following, which is equivalent to \eqref{QIA}:\\

\begin{Proposition}\label{QIA_prop}
Let $\psi: \N \to [0,1/2]$ be an arbitrary function and let $1 < X < Y$ be such that 
\[\sum_{\substack{q\in [X,Y]}}\lambda(A_q) \in [1,2].\]
Then there exists $C > 0$ (independent of $X,Y$) such that 
\[\sum_{\substack{X \leq q,r \leq Y}}\lambda(A_q) \lambda(A_r) \leq C.\]
\end{Proposition}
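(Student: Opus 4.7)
The statement as written reduces to elementary algebra. The plan is a single step: observe that the double sum on the left factorizes as
\[
\sum_{X \leq q, r \leq Y} \lambda(A_q)\lambda(A_r) \;=\; \Bigl(\sum_{X \leq q \leq Y} \lambda(A_q)\Bigr)^{\!2},
\]
so by the hypothesis $\sum_{q \in [X,Y]} \lambda(A_q) \in [1,2]$, the left-hand side lies in $[1,4]$. Hence the conclusion holds with the universal constant $C = 4$, independent of $\psi$, $X$, and $Y$. No number-theoretic, sieve-theoretic, or combinatorial input is needed; there is no genuine obstacle here, only the observation that a separable bilinear sum over a product index set factorizes.

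For context, I suspect the line in the excerpt is a typographical slip: the statement that actually carries the weight of the strategy at this point, namely the one corresponding to \eqref{QIA} and to hypothesis \eqref{eqn05} in Theorem \ref{T2MS}, is the quasi-independence/overlap estimate
\[
\sum_{X \leq q, r \leq Y} \lambda(A_q \cap A_r) \;\leq\; C \Bigl(\sum_{X \leq q \leq Y} \lambda(A_q)\Bigr)^{\!2}.
\]
If this intended version is what one is meant to prove, the plan would be: first, bound the off-diagonal overlap $\lambda(A_q \cap A_r)$ by a standard upper-bound sieve (the Pollington--Vaughan estimate from Step~2 of the outline), producing an explicit arithmetic factor depending on $q$, $r$, and $\gcd(q,r)$; second, split pairs $(q,r) \in [X,Y]^2$ into a \emph{good} set where this arithmetic factor is bounded by an absolute constant times $\lambda(A_q)\lambda(A_r)$, and a \emph{bad} set where it is not; third, perform a dyadic decomposition of the bad set by the size of $\gcd(q,r)$ and by the sizes of $q$ and $r$; finally, invoke the Koukoulopoulos--Maynard combinatorial GCD input (Proposition~\ref{Proposition:Prop5.4}) to show that, on each dyadic scale, the bad set is too sparse to contribute more than a constant multiple of $\bigl(\sum \lambda(A_q)\bigr)^2$.

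In the intended form, the hard part is the last step: all the sieve and dyadic bookkeeping is standard, but controlling the bad set requires the non-existence of a large family of integers whose pairwise GCDs are all unusually large, which is precisely the combinatorial heart of the Koukoulopoulos--Maynard breakthrough. The stated form, however, needs only the factorization above.
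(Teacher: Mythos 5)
You are right that the displayed inequality in Proposition~\ref{QIA_prop} is a typographical slip: the left-hand side should read $\sum_{X \leq q,r \leq Y}\lambda(A_q \cap A_r)$, matching \eqref{QIA} and hypothesis \eqref{eqn05} of Theorem~\ref{T2MS}; the same slip recurs in the final display of Section~\ref{DS_from_prop}, which should conclude $\sum_{(q,r)\in[X,Y]^2}\lambda(A_q\cap A_r)\ll 1$. As you observe, the literal statement factorises as $\bigl(\sum_{X \leq q \leq Y}\lambda(A_q)\bigr)^2 \in [1,4]$ and is vacuous with $C=4$.

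Your high-level sketch of the intended proof (Pollington--Vaughan sieve-theoretic overlap bound, a decomposition, then the Koukoulopoulos--Maynard combinatorial input) matches the paper's structure, but one detail is off: the paper's decomposition is not dyadic in $\gcd(q,r)$ or in the sizes of $q$ and $r$. Instead, it partitions $[X,Y]^2$ into classes $\mathcal{F}_j$ indexed by the ``anatomy'' threshold $j(q,r)$ governing where $\sum_{p\mid qr/\gcd(q,r)^2,\, p>e^j}1/p$ crosses the value $10$. On each $\mathcal{F}_j$, \eqref{overlap_final} together with Mertens's estimate shows the overlap factor is $O(1)$ when $D(q,r)>e^j$ and $O(j)$ when $D(q,r)\leq e^j$; the latter pairs lie in $\mathcal{E}_{e^j}$, so Proposition~\ref{Proposition:Prop5.4} bounds their total weight by $\ll e^{-j}$, and summing $j e^{-j}$ over $j$ closes the argument. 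Your identification of the combinatorial heart --- ruling out a large weighted family of integers with unusually many large pairwise GCDs --- is exactly right; the bookkeeping that feeds into it just uses a different splitting parameter than you proposed.
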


Thus we have reduced the proof to showing \eqref{QIA}.

\subsection{The sieve estimate}

We follow the general ideas of Pollington and Vaughan \cite{PV90}. Given $q,r \in \N$ with $q\neq r$, we aim to understand the so-called \textit{overlap}, i.e. $\lambda(A_q \cap A_r)$, in the form of a good upper bound in terms of  $\lambda(A_q)\lambda(A_r)$.
Looking at \eqref{def_Aq}, we see that $A_q$ consists of small intervals of length $\frac{\psi(q)}{q}$ around rationals $a/q$ where $a$ is coprime to $q$. Therefore any $x \in A_q \cap A_r$ has to lie in
\[\left[\frac{a}{q} - \frac{\psi(q)}{q}, \frac{a}{q} + \frac{\psi(q)}{q}\right] \cap \left[\frac{b}{r} - \frac{\psi(r)}{r}, \frac{b}{r} + \frac{\psi(r)}{r}\right]\] for some $a \in \mathbb{Z}_q^{*}, b \in \mathbb{Z}_r^{*}$. Observe that $\left[\frac{a}{q} - \frac{\psi(q)}{q}, \frac{a}{q} + \frac{\psi(q)}{q}\right] \cap \left[\frac{b}{r} - \frac{\psi(r)}{r},  \frac{b}{r} + \frac{\psi(r)}{r}\right]$ is empty unless 
\[\left\lvert\frac{a}{q} - \frac{b}{r}\right\rvert \leq 2\max\left\{\tfrac{\psi(q)}{q},\tfrac{\psi(r)}{r}\right\},\]
and since the length of the intersection of two intervals is trivially bounded by the length of the shorter one, we have
\[\lambda\left(\left[\frac{a}{q} - \frac{\psi(q)}{q}, \frac{a}{q} + \frac{\psi(q)}{q}\right] \cap \left[\frac{b}{r} - \frac{\psi(r)}{r},  \frac{b}{r} + \frac{\psi(r)}{r}\right]\right) \leq 2\min\left\{\tfrac{\psi(q)}{q},\tfrac{\psi(r)}{r}\right\}.\]

Assuming now by symmetry arguments $\tfrac{\psi(q)}{q} \leq \tfrac{\psi(r)}{r}$, we obtain\footnote{We use the Vinogradov notation $f \ll g \Leftrightarrow f = O(g)$.}
\[\lambda(A_q \cap A_r) \ll 
\tfrac{\psi(q)}{q}\#\left\{(a,b) \in \mathbb{Z}_q^{*} \times \mathbb{Z}_r^{*}: \left\lvert\tfrac{a}{q} - \tfrac{b}{r}\right\rvert \leq \tfrac{\psi(r)}{r}\right\}.
\]

Thus we are left with a counting problem, which after the application of the Chinese Remainder Theorem and a simple counting argument shows that we have\footnote{In this article, a summation respectively a product over $p$ is always understood to be a summation respectively product over prime numbers.}

\[\#\left\{(a,b) \in \mathbb{Z}_q^{*} \times \mathbb{Z}_r^{*}: \left\lvert\tfrac{a}{q} - \tfrac{b}{r}\right\rvert \leq \tfrac{\psi(r)}{r}\right\} = 
2 \tfrac{\varphi(\gcd(q,r))^2}{\gcd(q,r)} \sum_{\substack{1 \leq c \leq \frac{q\psi(r)}{\gcd(q,r)}\\(c,\frac{qr}{\gcd(q,r)^2}) = 1}} \prod_{p \mid (\gcd(q,r),c)} \left(1 + \frac{1}{p-1}\right).
\]

The right-hand side above is now in the form of a typical sieve problem: In general, many sieve problems are of the form
$\sum_{\substack{1 \leq c \leq X\\(c,n) = 1}} f(c)$
where $f$ is a weight function that very often (as here) is \textit{multiplicative}, i.e. $f(mn) = f(m)f(n)$ for $\gcd(m,n) = 1$. Such a weight function only causes minor inconveniences, so what one essentially tries to prove is that
\[\sum_{\substack{1 \leq c \leq X\\(c,n) = 1}} 1 \approx X \frac{\varphi(n)}{n},\] since the proportion of all integers that are coprime to $n$ is $\tfrac{\varphi(n)}{n}$. This heuristic is of course far from the truth when $X$ is particularly small for a trivial reason: Assume that $p \mid n$ for a prime such that $p > X$. Then a number $1 \leq c \leq X$ is coprime to $n$ if and only if it is coprime to $n/p$,
so \[\sum_{\substack{1 \leq c \leq X\\(c,n) = 1}} 1 = \sum_{\substack{1 \leq c \leq X\\(c,n/p) = 1}} 1.\]

Note however that $\frac{\frac{\varphi(n)}{n}}{\frac{\varphi(n/p)}{n/p}}= 1 - \frac{1}{p}$; The same argumentation can be done for any prime $p > X$, so the best we could hope for is to achieve
\[\sum_{\substack{1 \leq c \leq X\\(c,n) = 1}} 1 \ll X \frac{\varphi(n)}{n} \left(1 + \prod\limits_{\substack{p > X\\ p \mid n}}\left(1 - \frac{1}{p}\right)^{-1}\right).\]

Indeed, the \textit{combinatorial sieve} (see e.g. \cite[Chapter 2]{sieves}) achieves this for our problem (with $X = \frac{q\psi(r)}{\gcd(q,r)},n = \frac{qr}{\gcd(q,r)^2}$).
Simplifying by elementary means and defining 
\begin{equation}\label{D_def}D = D(q,r) := \frac{\max(q\psi(r), r \psi(q))}{\gcd(q,r)},\end{equation} we obtain

\begin{equation}\label{overlap_final}\begin{split}\lambda(A_q \cap A_r) &\ll \frac{2\varphi(q)\psi(q)}{q}\frac{2\varphi(r)\psi(r)}{r}\prod_{\substack{p \mid qr\\p > q\psi(r)}} \left(1 - \frac{1}{p}\right)^{-1}
\\&\ll \lambda(A_q) \cdot\lambda(A_r)\prod_{\substack{p \mid \frac{qr}{\gcd(q,r)^2}\\p > D(q,r)}} \left(1 + \frac{1}{p}\right).
\end{split}
\end{equation}

\subsection{Proof of the combinatorial part}\label{comb_part}

If \[\prod\limits_{\substack{p \mid \frac{qr}{\gcd(q,r)^2}\\p > D(q,r)}} \left(1 + \frac{1}{p}\right) \asymp \exp\Bigg(\sum\limits_{\substack{p \mid \frac{qr}{\gcd(q,r)^2}\\p > D(q,r)}}
\frac{1}{p}\Bigg)\] was uniformly bounded by a constant, then the proof would follow immediately from Steps 1 and 2 (in the form of combining Proposition \ref{QIA_prop} with \eqref{overlap_final}). However, since $\sum\limits_{p \in \mathbb{P}} \frac{1}{p}$ diverges, this term can in general be arbitrarily big, which means that a pointwise point by simply considering fixed pairs $(q,r)$ is doomed to fail. Thus we need to control $\prod\limits_{\substack{p \mid qr\\p > q\psi(r)}} \left(1 + \frac{1}{p}\right)$ on \textit{average} over $q,r$ with respect to the bilinear weights $w_{q,r} := 
\frac{\varphi(q)\psi(q)}{q}\frac{\varphi(r)\psi(r)}{r}$ that arise from $\lambda(A_q) \cdot\lambda(A_r).$ However, $\psi$ is an \textit{arbitrary} function whose support might be super-sparse, so it seems almost impossible (at least with common averaging techniques known in analytic number theory\label{avg_techn}) to get a useful improvement by averaging - which however is necessary. This is the slightly technical explanation why the Duffin--Schaeffer Conjecture was open for 79 years and needed the invention of a completely novel technique for its resolution. We want to remark that this strategy has not only been applied since its invention for results in metric Diophantine approximation \cite{ABH23,HSW,KMY24}, but has since been adapted to achieve results in combinatorial number theory \cite{GW21} as well as solving a Conjecture of Erd\H{o}s from 1948 on integer dilation approximations \cite{KLL25}. We believe this strategy to be useful in various future applications across different areas of number theory. This is the main reason why we provide an oversimplified view of the key ideas of the proof in the next subsection, with the hope of making the proof more attainable for a broader audience.\\
Returning to the proof of the Duffin--Schaeffer Conjecture, the technical main achievement \cite[Proposition 5.4]{KMDS}) can be rephrased by the following proposition.\\

\begin{Proposition}\label{Proposition:Prop5.4}
    Let $0 \leq X \leq Y$ and $\psi: \mathbb{N} \cap [X,Y] \to [0,\infty)$ such that
    \[1 \leqslant \sum_{X \leqslant n \leqslant Y}\frac{\psi(n)\varphi(n)}{n} \leqslant 2.\]
    For each $j \in \N$, denote
    \begin{equation}
        \label{def_ej}
    \mathcal{E}_{e^j} := \Big\{(v,w) \in (\mathbb{N} \cap [X,Y])^2: D(v,w) \leqslant e^j, \sum_{\substack{p \vert \frac{vw}{\gcd(v,w)^2} \\ p \geqslant e^j}} \frac{1}{p}\geqslant 10 \Big\}.\end{equation}
    Then 
    \[\sum_{(v,w) \in \mathcal{E}_{e^j}} \frac{\psi(v)\varphi(v)}{v}\frac{\psi(w)\varphi(w)}{w} \ll \frac{1}{e^j}.\]
\end{Proposition}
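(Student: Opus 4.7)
My first move is to turn the additive condition $\sum_{p \mid L(v,w),\, p \geq e^j} 1/p \geq 10$ (writing $L(v,w) := vw/\gcd(v,w)^2$) into a multiplicative quantity, since everything else in the estimate is naturally multiplicative. The elementary bound $\mathbf{1}[S \geq 10] \leq e^{S-10}$ combined with $e^{1/p} \leq 1 + 2/p$ for primes $p \geq 2$ gives the pointwise majorant
\begin{equation*}
\mathbf{1}_{\mathcal{E}_{e^j}}(v,w) \;\leq\; e^{-10}\prod_{\substack{p \mid L(v,w) \\ p \geq e^j}} \!\Big(1 + \tfrac{2}{p}\Big).
\end{equation*}
The proposition then reduces to showing
\begin{equation*}
\sum_{\substack{v,w \in [X,Y]\\ D(v,w) \leq e^j}} \frac{\psi(v)\varphi(v)}{v}\cdot\frac{\psi(w)\varphi(w)}{w}\!\prod_{\substack{p \mid L(v,w)\\ p \geq e^j}} \!\Big(1 + \tfrac{2}{p}\Big) \;\ll\; e^{-j}.
\end{equation*}

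\textbf{Step 2: Expanding the product and using the $D(v,w)\leq e^j$ constraint.} Next I would expand the product over primes as a sum over squarefree divisors $d = d_1 d_2$ of $L(v,w)$ whose prime factors are all at least $e^j$; under the squarefree assumption on the support of $\psi$, each contributing prime divides exactly one of $v$ or $w$, so one may factor uniquely with $d_1 \mid v$, $d_2 \mid w$, $(d_1,d_2)=1$, and $(d_1 d_2, \gcd(v,w)) = 1$. A dyadic decomposition in $v$, $w$ and $\gcd(v,w)$ reduces the problem to uniform scales $v \asymp V$, $w \asymp W$, $\gcd(v,w) \asymp G$, with logarithmic losses that can be absorbed by slightly strengthening the constant $10$ in the Markov step. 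The constraint $D(v,w)\leq e^j$ now reads $\max(v\psi(w), w\psi(v)) \leq e^j\gcd(v,w)$, giving pointwise upper bounds on $\psi(v)$ and $\psi(w)$ in terms of $V,W,G$ and $e^j$; this is the mechanism by which a factor of $e^{-j}$ begins to appear in the weighted count.

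\textbf{Step 3: The core combinatorial step — the main obstacle.} One is then left to bound, for each scale $(V,W,G)$ and each coprime pair $(d_1,d_2)$ of large divisors, the weighted number of pairs $(v,w)$ with $d_1\mid v$, $d_2\mid w$, $\gcd(v,w)\asymp G$, and $D(v,w)\leq e^j$. This is the heart of the Koukoulopoulos--Maynard theorem and the step I expect to be the main obstacle: since $\psi$ is arbitrary and possibly very sparse (as emphasized on page~\pageref{avg_techn}), the naive averaging techniques from analytic number theory are unavailable. My plan is to follow the iterative ``GCD graph'' cleaning procedure: associate to the remaining pairs a weighted bipartite multigraph whose edges encode the admissible $(v,w)$, and iteratively prune it by locating either a common large prime factor (whose extraction contradicts $D(v,w)\leq e^j$ because it would force $\gcd(v,w)$ to be substantially larger than $G$), or a regular sub-structure which the mass hypothesis $\sum \psi(n)\varphi(n)/n \leq 2$ directly controls. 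The quantitative challenge, which I expect to occupy the bulk of the proof, is to show that each pruning step either extracts a prime $\geq e^j$ with a controlled loss or decreases the residual weight by a definite factor, so that after $O(1)$ iterations the large-prime support of $L(v,w)$ is forced below the threshold $10$ unless the original weighted sum was already $\ll e^{-j}$; in the streamlining of~\cite{HSW} this quantitative bookkeeping is precisely what replaces and shortens the original ingenious construction of~\cite{KMDS}.
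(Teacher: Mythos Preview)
Your Step 1 contains a genuine error that makes the rest of the argument unworkable. By applying the exponential Markov inequality with parameter $\lambda = 1$, i.e.\ $\mathbf{1}[S\geq 10]\leq e^{S-10}$, you replace the anatomy constraint by a constant prefactor $e^{-10}$ together with the multiplicative weight $\prod_{p\mid L(v,w),\,p\geq e^j}(1+2/p)$. The resulting inequality you propose to prove,
\[
\sum_{\substack{v,w\in[X,Y]\\ D(v,w)\leq e^j}} \frac{\psi(v)\varphi(v)}{v}\frac{\psi(w)\varphi(w)}{w}\prod_{\substack{p\mid L(v,w)\\ p\geq e^j}}\Big(1+\tfrac{2}{p}\Big)\;\ll\; e^{-j},
\]
is simply false. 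Take any $j$ and let $\psi$ be supported on a single integer $n_0$ with $\psi(n_0)=n_0/\varphi(n_0)$, so that $\sum \psi\varphi/n =1$. Then the only pair is $(n_0,n_0)$, for which $L=1$ (empty product), $D(n_0,n_0)=n_0/\varphi(n_0)\ll 1\leq e^j$, and the sum equals $1$, not $O(e^{-j})$. Of course $\mathcal{E}_{e^j}$ is empty here, so the original Proposition is trivially true; the point is that your majorant has thrown away precisely the information (the anatomy condition) that forces the $e^{-j}$ decay.

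The paper's route is structurally different. Rather than linearising the indicator, it rescales $\widetilde\psi:=\psi/e^j$, so that $D_{\widetilde\psi,\widetilde\psi}(v,w)\leq 1$ exactly when $D(v,w)\leq e^j$, and then applies the main technical estimate (Theorem~\ref{Theorem:maintheorem}) as a black box. That theorem bounds $\mu_{\widetilde\psi,\widetilde\psi}(\mathcal{E})$ by $(\mu_{\widetilde\psi}(V)^2 e^{-Ct})^{1/2+\varepsilon}$ with $C=10$ and $t=e^j$, yielding a factor $e^{-10e^j(1/2+\varepsilon)}$; it is this \emph{super-exponential} decay in $e^j$, coming from keeping $C$ and $t$ coupled, that swamps the $e^{2j}$ paid for undoing the rescaling and delivers $\ll e^{-j}$. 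In effect the anatomy lemmas in the paper (Lemmas~\ref{Lemma:unweightedanatomy}--\ref{Lemma:divisoranatomy}) do use an exponential Markov step, but with parameter $\lambda\asymp t=e^j$, not $\lambda=1$; your choice of $\lambda$ loses the entire gain. Your Step 3 gestures at an iterative GCD-graph procedure, but even with a corrected Step 1 it does not engage with the actual mechanism of the proof, namely the minimal-counterexample argument (Proposition~\ref{Prop:structureofminimalcounterexample}) that forces the arithmetic structure~\eqref{padic_structure} and then the endgame (Proposition~\ref{Prop:resolvingminimalcounterexample}) that exploits it.
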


Before we provide the proof of Proposition \ref{Proposition:Prop5.4}, we show how to finish the proof of the Duffin--Schaeffer Conjecture from here.

\subsubsection{Proof of the Duffin--Schaeffer conjecture assuming Proposition \ref{Proposition:Prop5.4}}\label{DS_from_prop}
Given a pair $(q,r)$, we define $j(q,r)$ to be the smallest $j$ such that
$\sum_{\substack{p \mid \frac{qr}{(q,r)^2}\\p > e^j}}\frac{1}{p}\geq 10$. 
This provides a partition
$[X,Y]^2 = \bigcup_{j \in \N}\mathcal{F}_j$
where $\mathcal{F}_j := \{(q,r) \in [X,Y]^2: j(q,r) = j\}$.

For a pair $(q,r) \in \mathcal{F}_j$, we see that by the classical Mertens estimate
\[\sum_{y \leq p \leq x} \frac{1}{p} = \log \log x - \log \log y + O(1),\]
we get
\[\begin{split}\prod_{\substack{p \mid \frac{qr}{\gcd(q,r)^2}\\p > D(q,r)}} \left(1 + \frac{1}{p}\right)
&\ll \exp\Bigg(\sum_{\substack{p \mid \frac{qr}{\gcd(q,r)^2}\\ D(q,r)< p < e^{j}}}\frac{1}{p} 
+ \underbrace{\sum_{e^j < p < e^{j+1}}\frac{1}{p}}_{= O(1)} + 
\underbrace{\sum_{\substack{p \mid \frac{qr}{\gcd(q,r)^2}\\p > e^{j+1}}} \frac{1}{p}}_{\leq 10}\Bigg)
\\&\ll \exp\Bigg(O(1) + \sum_{\substack{p \mid \frac{qr}{\gcd(q,r)^2}\\ D(q,r)< p < e^{j}}}\frac{1}{p} \Bigg)
\\&\ll  \exp\Bigg(\sum_{\substack{p \mid \frac{qr}{\gcd(q,r)^2}\\ D(q,r)< p < e^{j}}}\frac{1}{p} \Bigg)
\ll \begin{cases}
    1 \text{ if } D(q,r) > e^j,\\
    j \text { otherwise}.
\end{cases}
\end{split}\]

In view of \eqref{overlap_final}, this shows

\[\sum_{(q,r) \in \mathcal{F}_j}\lambda(A_q \cap A_r) \ll \sum_{(q,r) \in \mathcal{F}_j}\lambda(A_q) \lambda(A_r)
+ j \sum_{\substack{(q,r) \in \mathcal{F}_j\\D(q,r) \leq e^j}}\lambda(A_q) \lambda(A_r).
\]

Now we note that any pair $(q,r) \in \mathcal{F}_j$ that satisfies $D(q,r) \leq e^j$ is contained in $\mathcal{E}_{e^j}$ as in \eqref{def_ej}, so an application of Proposition \ref{Proposition:Prop5.4} proves 

\[\sum_{\substack{(q,r) \in \mathcal{F}_j\\D(q,r) \leq e^j}}\lambda(A_q) \lambda(A_r) \ll \frac{1}{e^j}.\]
Thus we obtain
\[\sum_{(q,r) \in \mathcal{F}_j}\lambda(A_q \cap A_r) \ll \frac{j}{e^j} + \sum_{(q,r) \in \mathcal{F}_j}\lambda(A_q) \lambda(A_r).
\]
Summing over all $j$, and using that $\sum_{j \in \N}\frac{j}{e^j}$ converges, shows
\[\sum_{\substack{(q,r) \in [X,Y]^2}}\lambda(A_q) \lambda(A_r) \ll 1,\]
which by Proposition \ref{QIA_prop} proves the Duffin--Schaeffer Conjecture.

\begin{proof}[Proof of Proposition \ref{Proposition:Prop5.4}]

\label{strategy}
Here we follow the strategy of \cite{HSW} that can be summarized as follows: Assuming that there exists a counterexample with a minimal number of primes involved, i.e., given an integer interval $[X,Y]$, let $S(\psi) := \supp(\psi) \cap [X,Y]$ and let $P(\psi) := \{p \in \mathbb{P}: \exists q \in S(\psi): p \mid q\}$, and assume that for $(P(\psi),\psi)$ Proposition \ref{Proposition:Prop5.4} is false, but for any $\Theta: \N \to [0,\infty)$ with $P(\Theta) \subsetneq P(\psi)$, the proposition is true.\\

We show that in this case (see Proposition \ref{Prop:structureofminimalcounterexample}), most of the ``weight'' $\frac{\varphi(q)\psi(q)}{q}\frac{\varphi(r)\psi(r)}{r}$ is concentrated (multiplicatively) around one integer $N$, in the sense that most of the weight is concentrated on pairs $(q,r)$ such that 
\begin{equation}\lvert \nu_p(N/q)\rvert + \lvert \nu_p(N/r)\rvert \leq 1.\label{padic_structure}\end{equation} Here, $\nu_p$ denotes the $p$-adic valuation, i.e. for $a/b \in \mathbb{Q}$,
\[\nu_p(a/b) := \max \{j \in \N: p^j \mid a\} - \max\{j \in \N: p^j \mid b\}.\]

Establishing \eqref{padic_structure} is the main step in the proof. The second step (Proposition \ref{Prop:resolvingminimalcounterexample}) is then showing that on this set, condition \eqref{padic_structure} makes it impossible for this to be a counterexample. This provides a contradiction to the assumption that a minimal counterexample exists, which implies that no counterexample can exist.\\

 It turns out that it is \textit{easier} to prove Proposition \ref{Proposition:Prop5.4} in a more general statement; while this sounds counterintuitive, we need this to get a stronger ``induction hypothesis'' for the minimal counterexample procedure to work\footnote{In this form, the approach of \cite{HSW} which we consider here, differs from the one of Koukoulopoulos--Maynard \cite{KMDS} (and is the main reason for the simplification): In \cite{KMDS}, no such generalization is considered, but one defines a delicate iterative procedure where one seeks to get a number-theoretic structure similar to \eqref{padic_structure} by maximizing an ad-hoc defined \textit{quality} that in the end (seemingly miraculously) does the job.}. 
 
 In order to state the more general statement, we  need to introduce some notation.
 We work on pairs $(q,r) \in [X,Y]^2$, where a weight is given of the form $w_{q,r} := \frac{\varphi(q)\psi(q)}{q}\frac{\varphi(r)\psi(r)}{r}$. This can be translated in the language of graph theory where the edge set of the graph is given by $\mathcal{E} \subset V \times W$ with $V,W \subseteq [X,Y]$, and an edge $(v,w) \in \mathcal{E}$ gets assigned the bilinear weight $w_{q,r} := \frac{\varphi(q)\psi(q)}{q}\frac{\varphi(r)\psi(r)}{r}$. 

 The generalization means that we allow asymmetric weights of the form $w_{q,r} := \frac{\varphi(q)\psi(q)}{q}\frac{\varphi(r)\theta(r)}{r}$ where $\theta,\psi: \N \to \mathbb{R}_{\geq 0}$ are possibly distinct weight functions with support in $[X,Y]$.\\

\begin{Definition}[Measures]
Let $\psi,\theta: \mathbb{N} \longrightarrow [0,\infty)$ be finitely supported. For $v \in \mathbb{N}$ and $V \subset \N$, we define \[\mu_{\psi}(v): = \frac{\varphi(v) \psi(v)}{v}, \quad \mu_{\psi}(V) := \sum_{v \in V} \mu_{\psi}(v).\] If $\mathcal{E} \subset \mathbb{N} \times \mathbb{N}$, we define \[ \mu_{\psi,\theta}(\mathcal{E}): = \sum_{(v,w) \in \mathcal{E}} \mu_{\psi}(v) \mu_{\theta}(w),\]
and denote by $\Gamma_{\mathcal{E}}(v): = \{w: (v,w) \in \mathcal{E}\}$ the neighbours of $v$ in the graph $G = (V \cup W, \mathcal{E})$.
\end{Definition}

In view of Proposition \ref{Proposition:Prop5.4}, we define now a generalization of the sets $\mathcal{E}_{e^j}$ in the language of graphs in the following way.

\begin{Definition}
\label{Definition:edge_set}
Let $\psi,\theta: \mathbb{N} \longrightarrow \mathbb{R}_{ \geqslant 0}$ be finitely supported, with $V_{\psi} = \supp \psi$ and $W_{\theta} = \supp \theta$, and let $t \geqslant 1$ and $C \in \mathbb{R}$. Then we define \[ \mathcal{E}_{\psi,\theta}^{t, C}: = \Bigg\{(v,w) \in V_{\psi} \times W_{\theta}: \, D_{\psi,\theta}(v,w) \leqslant 1, \, \sum_{\substack{p \vert \frac{vw}{\gcd(v,w)^2} \\ p \geqslant t}} \frac{1}{p}\geqslant C\Bigg\},\] where
\[ D_{\psi,\theta}(v,w) = \frac{\max\{w\psi(v), v \theta(w)\}}{\gcd(v,w)}.\] 
\end{Definition}

\noindent With this at hand, we state the following technical-looking main result from where we deduce Proposition \ref{Proposition:Prop5.4} immediately. 

\begin{Theorem}[Main technical result]
\label{Theorem:maintheorem}
Let $\varepsilon \in (0,2/5]$. Then there exists $p_0(\varepsilon) >0$ such that the following holds. Let $\psi,\theta: \mathbb{N} \longrightarrow \mathbb{R}_{\geqslant 0}$ be finitely supported o, $V_{\psi} = \supp \psi$, $W_{\theta} = \supp \theta$ and \[ \mathcal{P}_{\psi,\theta}: = \{p: \, \exists \, (v,w) \in V_{\psi}\times W_{\theta} \, \text{s.t.} \, p \vert vw\}.\] Let $P_{\psi,\theta}(\varepsilon): = p_0(\varepsilon) + \vert \mathcal{P}_{\psi,\theta} \cap [1, p_0(\varepsilon)]\vert$. 
Suppose that $\mathcal{E} \subset \mathcal{E}_{\psi, \theta}^{t, C} \cap (V \times W)$.
Then for all $t \geqslant 1$ and any $C \in \mathbb{R}$ we have
\begin{equation}
\label{eq:maintheoremconclusion}
\mu_{\psi,\theta}(\mathcal{E}) \leqslant 1000^{P_{\psi,\theta}(\varepsilon)}( \mu_{\psi}(V) \mu_{\theta}(W) e^{-Ct})^{\frac{1}{2} + \varepsilon}.
\end{equation}
\end{Theorem}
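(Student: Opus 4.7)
The plan is to follow the minimal counterexample strategy announced at the start of Section \ref{comb_part}, with an induction running on the number of ``large'' primes that appear in the supports. Fix $\varepsilon \in (0,2/5]$ and let $p_0 = p_0(\varepsilon)$ be a threshold to be determined by the quantitative bookkeeping of the argument. Suppose, for contradiction, that \eqref{eq:maintheoremconclusion} fails for some data $(\psi,\theta,\mathcal{E},t,C)$. I would pick such a counterexample minimising the cardinality of $\mathcal{P}_{\psi,\theta}\setminus[1,p_0]$, which yields the induction hypothesis that \eqref{eq:maintheoremconclusion} holds whenever $\mathcal{P}_{\Psi,\Theta}\setminus[1,p_0]\subsetneq \mathcal{P}_{\psi,\theta}\setminus[1,p_0]$. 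This is exactly the reason the statement was generalised away from the symmetric setting of Proposition \ref{Proposition:Prop5.4}: the subproblems produced by peeling off a prime naturally have distinct weights on the two sides of the bipartite graph.

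First I would prove the structural step (the analogue of Proposition \ref{Prop:structureofminimalcounterexample}): there exists an integer $N$ and a subset $\mathcal{E}'\subset\mathcal{E}$ carrying essentially all the mass $\mu_{\psi,\theta}(\mathcal{E})$, such that every pair $(v,w)\in\mathcal{E}'$ obeys the $p$-adic rigidity \eqref{padic_structure}: $\lvert\nu_p(N/v)\rvert+\lvert\nu_p(N/w)\rvert\leq 1$ for every $p\in\mathcal{P}_{\psi,\theta}$. The integer $N$ is built prime by prime. For each $p\in\mathcal{P}_{\psi,\theta}$ I would partition $V_\psi$ and $W_\theta$ by the $p$-adic valuation, designate the ``most popular'' valuations as the $p$-component of $N$, and suppose for contradiction that a substantial proportion of $\mu_{\psi,\theta}(\mathcal{E})$ sits on pairs with $\lvert\nu_p(N/v)\rvert+\lvert\nu_p(N/w)\rvert\geq 2$. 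Such pairs can be grouped by their fixed $p$-valuations, and on each group the induced weight functions, after dividing out the common $p$-power, are supported on integers coprime to $p$. The resulting subproblem has $p$ removed from the prime set and therefore falls under the induction hypothesis, yielding a bound that forces the bad proportion to be negligible and confines almost all of $\mu_{\psi,\theta}(\mathcal{E})$ to the rigid set $\mathcal{E}'$.

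Second I would prove the resolving step (the analogue of Proposition \ref{Prop:resolvingminimalcounterexample}): on the rigid set $\mathcal{E}'$, the bound \eqref{eq:maintheoremconclusion} does hold, contradicting the original assumption on $(\psi,\theta)$. For pairs satisfying \eqref{padic_structure} the integers $v/\gcd(v,w)$ and $w/\gcd(v,w)$ are squarefree with disjoint prime supports, so $vw/\gcd(v,w)^2$ factors as a product of two coprime squarefree parts lying ``next to'' $N$. Parameterising $\mathcal{E}'$ by these two squarefree parts and invoking $D_{\psi,\theta}(v,w)\leq 1$ allows one to decouple the $\psi$-sum and the $\theta$-sum by Cauchy--Schwarz, and reduces the task to controlling $\sum_m \mu_\psi(m)$ over integers $m$ with $\sum_{p\mid m,\,p\geq t}1/p\geq C$. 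A standard Rankin/Chernoff computation with moment parameter $\lambda=t$ shows that this set is sparse enough to deliver the required suppression factor $e^{-Ct}$, producing exactly the exponent $\tfrac{1}{2}+\varepsilon$ after absorbing the Cauchy--Schwarz loss.

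The main obstacle is the first step. The subtlety is that $\supp\psi$ can be arbitrarily sparse, as emphasised on p.\pageref{avg_techn}, so there is no ``averaging'' to fall back on and the argument must be purely structural. Each prime-peeling loses a multiplicative constant (of order $1000$ in the chosen normalisation), and these losses accumulate across all primes in $\mathcal{P}_{\psi,\theta}\setminus[1,p_0]$, which is exactly what the factor $1000^{P_{\psi,\theta}(\varepsilon)}$ in the conclusion is designed to absorb. The exponent $\tfrac{1}{2}+\varepsilon$ rather than $\tfrac{1}{2}$ is crucial: the $\varepsilon$-slack allows the induction step to beat the loss incurred by removing a prime, so that the induction closes uniformly in $|\mathcal{P}_{\psi,\theta}|$. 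Calibrating $p_0(\varepsilon)$, $\varepsilon$, and the constant $1000$ so that every peeling step gains strictly more than it costs is the delicate quantitative heart of the argument, and is where the Hauke--Vazquez--Walker streamlining of Koukoulopoulos--Maynard obtains its conceptual clarity.
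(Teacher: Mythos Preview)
Your high-level two-step plan (structural rigidity, then resolution) matches the paper, and your description of the structural step is broadly right. But there is a genuine gap in the resolving step: you have omitted the combinatorial regularity that the paper calls property~(b). In the paper, after obtaining the arithmetically rigid set $\mathcal{E}^{*}$ (your \eqref{padic_structure}), one passes to a further subset $\mathcal{E}'\subset\mathcal{E}^{*}$ that is \emph{minimal} subject to still being a near-counterexample; this second minimality forces every vertex to have large neighbourhood mass, namely $\mu_{\theta}(\Gamma_{\mathcal{E}'}(v))\geq \tfrac{1}{q'}\mu_{\psi,\theta}(\mathcal{E}')/\mu_{\psi}(V')$ and symmetrically. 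This regularity is indispensable in the resolution. The condition $D_{\psi,\theta}(v,w)\leq 1$ only gives $\psi(v)\leq \gcd(v,w)/w$, a bound that depends on the \emph{pair}, so a direct Cauchy--Schwarz cannot decouple the two sides. The paper instead uses property~(b) to select a maximiser $w_0$ (and $v_0(w)$) so that $\psi(v)\leq 1/(v^{-}w_0^{+})$ uniformly over $v\in\Gamma_{\mathcal{E}'}(w_0)$, and only then can one reparametrise by $(v^{-},v^{+},w^{-},w^{+})$ and apply the anatomy Lemmas~\ref{Lemma:unweightedanatomy}--\ref{Lemma:divisoranatomy}. Your proposed reduction ``to controlling $\sum_m \mu_\psi(m)$ over integers $m$ with $\sum_{p\mid m,\,p\geq t}1/p\geq C$'' does not work as stated, since $\mu_\psi(m)=\varphi(m)\psi(m)/m$ still contains the arbitrary weight~$\psi$.

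A secondary point: your bookkeeping of the constant $1000$ is inverted. You say the losses ``accumulate across all primes in $\mathcal{P}_{\psi,\theta}\setminus[1,p_0]$'', but $P_{\psi,\theta}(\varepsilon)=p_0(\varepsilon)+|\mathcal{P}_{\psi,\theta}\cap[1,p_0(\varepsilon)]|$ does not see the large primes at all. In the paper, peeling a prime $p\leq p_0$ decreases $P_{\psi,\theta}$ by one, producing the factor $1000^{-1}$ in Lemma~\ref{Lemma:bilinearbound}; peeling a prime $p>p_0$ leaves $P_{\psi,\theta}$ unchanged, and there the $\varepsilon$-slack yields off-diagonal decay of order $p^{-1-2\varepsilon}$, whose sum over $p>p_0(\varepsilon)$ is forced below $1/2$ by choosing $p_0(\varepsilon)$ large. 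So the $1000$ and the $\varepsilon$ handle the small and large primes respectively; they are not redundant.
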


\begin{proof}[Proof of Proposition \ref{Proposition:Prop5.4} assuming Theorem \ref{Theorem:maintheorem}]
    Defining $\widetilde{\psi}(q) := \mathds{1}_{[q \in [X,Y]]}\frac{\psi(q)}{e^j}$, we observe that
    $\mathcal{E}_{e^j} = \mathcal{E}_{\widetilde{\psi},\widetilde{\psi}}^{e^j,10}$ (in the notation of Definition \ref{Definition:edge_set}). Thus applying 
    Theorem \ref{Theorem:maintheorem}, we obtain
    \begin{align*}
    \sum_{(v,w) \in \mathcal{E}_{e^j}} \frac{\psi(v)\varphi(v)}{v}\frac{\psi(w)\varphi(w)}{w} = (e^{2j})\mu_{\widetilde{\psi},\widetilde{\psi}}(\mathcal{E}_{\widetilde{\psi},\widetilde{\psi}}^{t,10}) &\ll_{\varepsilon} {e^j}^{1 - 2\varepsilon}e^{-10e^j\big(\tfrac{1}{2} + \varepsilon\big)}\ll_{\varepsilon} \frac{1}{e^j},
\end{align*}
as required. 
\end{proof}

\subsubsection{Proof of Theorem \ref{Theorem:maintheorem}}
We now apply the two-step strategy described on p. \pageref{strategy}. We believe that the first part (i.e. Proposition \ref{Prop:structureofminimalcounterexample}) is the heart of the method, and can in adapted form also be applied to other problems. Proposition \ref{Prop:resolvingminimalcounterexample} is more specific to this problem, and in an application in another area probably needs to be replaced by distinct arguments that exploit the structure obtained in Proposition \ref{Prop:structureofminimalcounterexample} in a different way.

\begin{Proposition}[Structure of minimal potential counterexample]
\label{Prop:structureofminimalcounterexample}
Suppose Theorem \ref{Theorem:maintheorem} were false. Fix functions $\psi,\theta: \mathbb{N} \longrightarrow \mathbb{R}_{\geqslant 0}$ with finite support such that $\vert \mathcal{P}_{\psi,\theta}\vert$ is minimal over all such pairs of functions for which there exist instances of $\varepsilon$, $t$, $C$, and $\mathcal{E} \subset \mathcal{E}^{t,C}_{\psi,\theta}$ satisfying the hypotheses of Theorem \ref{Theorem:maintheorem} for which \eqref{eq:maintheoremconclusion} fails. Fix such instances of  $\varepsilon$, $t$, $C$ and $\mathcal{E}$. Write $q^\prime: = \frac{2}{1 + 2\varepsilon}$. Then there exists $\mathcal{E}^{\prime} \subset \mathcal{E}$ for which
\begin{enumerate}
\item \emph{($\mathcal{E}^{\prime}$ is a near counterexample)}: $\mu_{\psi,\theta}(\mathcal{E}^{\prime}) > \frac{1}{2} \cdot 1000^{P_{\psi,\theta}(\varepsilon)}( \mu_{\psi}(V^{\prime}) \mu_{\theta}(W^{\prime}) e^{-Ct})^{\frac{1}{q^\prime}}$ where\\
 $V^{\prime}:= \mathcal{E}^{\prime}|_V, W^{\prime}: = \mathcal{E}^{\prime}|_W$.
\item \emph{($\mathcal{E}^{\prime}$ is combinatorially structured)}: for all $v \in V^{\prime}$ and $w \in W^{\prime}$, \[\mu_{\theta}(\Gamma_{\mathcal{E}^{\prime}}(v)) \geqslant \frac{1}{q^\prime} \frac{\mu_{\psi,\theta}(\mathcal{E}^{\prime})}{\mu_{\psi}(V^{\prime})} \qquad \text{and} \qquad \mu_{\psi}(\Gamma_{\mathcal{E}^{\prime}}(w)) \geqslant \frac{1}{q^\prime} \frac{\mu_{\psi,\theta}(\mathcal{E}^{\prime})}{\mu_{\theta}(W^{\prime})} .\] 
\item \emph{($\mathcal{E}^{\prime}$ is arithmetically structured)}: there exists $N \in \mathbb{N}$ such that for all primes $p$ and for all $(v,w) \in \mathcal{E}^{\prime}$, $\vert \nu_p(v/N)\vert + \vert \nu_p(w/N)\vert \leqslant 1.$ 
\end{enumerate}
\end{Proposition}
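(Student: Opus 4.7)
The plan is to prove the proposition in two conceptually separate stages. A standard graph-theoretic pruning yields the near-counterexample bound (1) together with the minimum-degree conditions (2); a prime-stripping induction exploiting the minimality hypothesis then yields the arithmetic structure (3). A final reapplication of the pruning on the arithmetically regularized subset ensures that all three properties hold simultaneously.

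For (1) and (2) I would use standard weighted pruning on the bipartite graph $(V,W,\mathcal{E})$: iteratively delete any vertex $v\in V$ whose weighted degree $\mu_\theta(\Gamma_{\mathcal{E}}(v))$ is below the current threshold $\tfrac{1}{q^\prime}\mu_{\psi,\theta}(\mathcal{E})/\mu_\psi(V)$, and similarly on the $W$-side. A short calculation shows that each deletion does not decrease the ratio $\mu_{\psi,\theta}(\cdot)/\mu_\psi(V)$ (since $1/q^\prime<1$), so the evolving thresholds only grow; consequently the total deleted mass can be bounded by a fraction of $\mu_{\psi,\theta}(\mathcal{E})$ strictly below $\tfrac12$. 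Since $\mathcal{E}$ is assumed to violate \eqref{eq:maintheoremconclusion}, more than half of that mass survives, yielding (1) (using monotonicity of $\mu_\psi(V^\prime)\le\mu_\psi(V)$ and $\mu_\theta(W^\prime)\le\mu_\theta(W)$), and the termination criterion is precisely (2).

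For (3) I would induct on $|\mathcal{P}_{\psi,\theta}|$ via a prime-stripping reduction. Fix $p\in\mathcal{P}_{\psi,\theta}$ and partition the edges by the valuation pair $(\nu_p(v),\nu_p(w))\in\mathbb{Z}_{\ge 0}^2$. Call $(a,b)$ \emph{$n$-centered} if $|a-n|+|b-n|\le 1$, so that the $n$-centered classes form a five-element plus-sign pattern. The dichotomy is: either some integer $n_p$ captures, via its five centered classes, a constant fraction of the total mass of $\mathcal{E}$, or by pigeonholing a \emph{single} non-centered class $(a,b)$ carries a constant fraction of that mass. In the second case, restrict $\mathcal{E}$ to this class, write $v=p^a v^\prime$ and $w=p^b w^\prime$, and define new weight functions $\tilde\psi,\tilde\theta$ supported on the $v^\prime$- (respectively $w^\prime$-) values by absorbing the $p$-parts together with their totient corrections, so that $\mu_{\tilde\psi}(v^\prime)=\mu_\psi(p^a v^\prime)$ and analogously for $\tilde\theta$. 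Then $|\mathcal{P}_{\tilde\psi,\tilde\theta}|=|\mathcal{P}_{\psi,\theta}|-1$, the constraint $D_{\tilde\psi,\tilde\theta}\le 1$ is inherited from $D_{\psi,\theta}\le 1$, and the tail sum $\sum_{q\ge t,\,q\mid v^\prime w^\prime/\gcd(v^\prime,w^\prime)^2}1/q$ changes by at most $1/p$, which the constant $C$ can absorb. The reduced system satisfies the hypotheses of Theorem \ref{Theorem:maintheorem} with the same $\varepsilon,t,C$ but still violates \eqref{eq:maintheoremconclusion}, contradicting the minimality of $|\mathcal{P}_{\psi,\theta}|$. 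Hence the first alternative always holds; intersecting the $n_p$-centered restrictions over all $p\in\mathcal{P}_{\psi,\theta}$ and setting $N:=\prod_p p^{n_p}$ delivers (3), after which a final pass of the pruning from the previous paragraph yields $\mathcal{E}^\prime$ enjoying all three properties.

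The main obstacle is making the prime-stripping reduction genuinely produce a strictly smaller counterexample. One must verify that the pigeonhole loss from restricting to a single $(a,b)$-class is absorbed by the generous constant $1000^{P_{\psi,\theta}(\varepsilon)}$ — which, crucially, depends only on $\varepsilon$ and the small primes of $\mathcal{P}_{\psi,\theta}$, so $P_{\tilde\psi,\tilde\theta}(\varepsilon)\le P_{\psi,\theta}(\varepsilon)$ — and that the chosen normalizations for $\tilde\psi,\tilde\theta$ preserve the precise shape of $D$, the tail sum, and both sides of \eqref{eq:maintheoremconclusion}. Naively iterating the centeredness restriction across the primes of $\mathcal{P}_{\psi,\theta}$ can erode the surviving mass geometrically, so either a carefully ordered iteration or a one-shot pigeonhole over all primes simultaneously is needed to keep the final $\mathcal{E}^\prime$ above the threshold demanded by (1).
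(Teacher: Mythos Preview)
Your overall architecture — prime-stripping via minimality to force arithmetic structure, followed by a pruning/popularity argument for the degree conditions — matches the paper's. But the crucial quantitative step in part (3) is missing, and your own final paragraph correctly identifies the unresolved obstacle.

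Your dichotomy (``either some $n_p$-centered plus-pattern captures a constant fraction, or a single non-centered class does and we strip $p$ to contradict minimality'') does not work for large primes $p>p_0(\varepsilon)$. When $p>p_0(\varepsilon)$ one has $P_{\tilde\psi,\tilde\theta}(\varepsilon)=P_{\psi,\theta}(\varepsilon)$, so there is no slack in the $1000^{P}$ factor to absorb the pigeonhole loss; and on a \emph{diagonal} class $(a,a)$ — which is precisely what ``non-centered'' means in the square-free setting — the rescaling of $\tilde\psi,\tilde\theta$ introduces no helpful power of $p$. Thus stripping does not produce a smaller counterexample: it only produces the \emph{bound} $m_p(i,j)\le (\alpha_i\beta_j e^{\mathds{1}_{[i\ne j]}})^{1/q'}p^{-|i-j|/q}$, and for $i=j$ this carries no $p$-decay at all. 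Consequently your argument, as written, only yields ``for each $p$ some $n_p$-centered pattern captures $\ge c$ of the mass'' for an absolute constant $c<1$, and intersecting over the primes erodes the mass geometrically — exactly the problem you flag.

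What the paper does instead is treat the minimality not as a source of contradiction but as a source of the bilinear upper bound above on \emph{every} class $m_p(i,j)$ (Lemma~\ref{Lemma:bilinearbound}). The $p^{-|i-j|/q}$ factor on the off-diagonal, combined with the constraint $\sum_{i,j}m_p(i,j)=1$ and a Cauchy--Schwarz/H\"older argument, forces the existence of a concentration point $k_p$ with non-centered mass $\ll p^{-1-2\varepsilon}+p^{-11/10}$. This is \emph{summable} over primes $p>p_0(\varepsilon)$, so a single union bound over all primes shows that the set $\mathcal{E}^*$ of edges centered at $N=\prod_p p^{k_p}$ retains at least half of $\mu_{\psi,\theta}(\mathcal{E})$. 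Only after this do they pass to the minimal $\mathcal{E}'\subset\mathcal{E}^*$ satisfying (1), from which (2) follows by the popularity argument (essentially your pruning). So the missing ingredient is precisely the bilinear bound with its off-diagonal $p$-power, and the concentration lemma that converts it into summable per-prime loss; your ``carefully ordered iteration or one-shot pigeonhole'' does not substitute for this.
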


Proposition \ref{Prop:structureofminimalcounterexample}  combined with the following result provides immediately the proof of Theorem \ref{Theorem:maintheorem}.\\

\begin{Proposition}[Resolution of minimal potential counterexample]
\label{Prop:resolvingminimalcounterexample}
Fix $\psi,\theta, \varepsilon, C, t$ satisfying the hypotheses of Theorem \ref{Theorem:maintheorem}. Suppose $\mathcal{E}^{\prime} \subset \mathcal{E}^{t,C}_{\psi,\theta}$ satisfies properties (b) and (c) from Proposition \ref{Prop:structureofminimalcounterexample} (with these parameters). Then $\mathcal{E}^{\prime}$ cannot satisfy property (a) of Proposition \ref{Prop:structureofminimalcounterexample}. 
\end{Proposition}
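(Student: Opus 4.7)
The plan is to combine the rigid arithmetic structure of (c), the large-prime mass inherent in $\mathcal{E}^{t,C}_{\psi,\theta}$, and the near-regularity from (b), to derive an upper bound of the form
\[\mu_{\psi,\theta}(\mathcal{E}')^{q'} \ \ll\ 1000^{q'P_{\psi,\theta}(\varepsilon)}\,\mu_\psi(V')\,\mu_\theta(W')\,e^{-Ct}\qquad (q' = 2/(1+2\varepsilon)),\]
which after taking $q'$-th roots strictly beats (a) (by a factor $2$) and thus contradicts it.

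\textbf{Step 1 (edge anatomy).} Property (c) yields a unique parameterisation of each $(v,w)\in\mathcal{E}'$ by $v = Na_v/b_v$, $w = Na_w/b_w$ with $a_v,b_v,a_w,b_w$ pairwise coprime and squarefree: for each prime $p$, the constraint $|\nu_p(v/N)|+|\nu_p(w/N)|\le 1$ assigns $p$ to exactly one of $\supp a_v$, $\supp b_v$, $\supp a_w$, $\supp b_w$, or to none. In these variables $\gcd(v,w) = N/(b_vb_w)$ and $vw/\gcd(v,w)^2 = a_va_wb_vb_w$, so membership in $\mathcal{E}^{t,C}_{\psi,\theta}$ simplifies to the three relations
\[\psi(v)\le \tfrac{1}{a_wb_v},\qquad \theta(w)\le \tfrac{1}{a_vb_w},\qquad \sum_{\substack{p\mid a_va_wb_vb_w\\ p\ge t}}\tfrac{1}{p}\ \ge\ C.\]

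\textbf{Steps 2--3 (Rankin weight, then H\"older with degree regularity).} I would attach to each edge the multiplicative weight $F(n) := \prod_{p\mid n,\ p\ge t}(1+\eta)$ with $\eta := e-1$. Since every prime in the sum exceeds $t$, that sum involves at least $Ct$ distinct prime factors of $a_va_wb_vb_w$, whence $F(a_va_wb_vb_w)\ge e^{Ct}$; by pairwise coprimality, $F(a_va_wb_vb_w) = F(a_v)F(b_v)F(a_w)F(b_w)$. This buys
\[\mu_{\psi,\theta}(\mathcal{E}') \ \le\ e^{-Ct}\sum_{(v,w)\in\mathcal{E}'} \mu_\psi(v)\mu_\theta(w)\,F(a_v)F(b_v)F(a_w)F(b_w).\]
Next I apply H\"older with exponent $q'$, pairing the edge weight with the sieve weight and using $\psi(v)\theta(w)\le 1/(a_va_wb_vb_w)$ (from $D\le 1$) to cancel one copy of the denominator in the resulting telescoped expression. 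The coprime anatomy then splits the double sum into a product of single-variable sums $\sum_{v\in V'}\mu_\psi(v)G(v)$ and $\sum_{w\in W'}\mu_\theta(w)G(w)$ with $G$ multiplicative in the anatomy primes. Property (b) enters at this point: the lower bounds $\mu_\theta(\Gamma_{\mathcal{E}'}(v))\ge \mu_{\psi,\theta}(\mathcal{E}')/(q'\mu_\psi(V'))$ and its symmetric counterpart are precisely what is needed to absorb the conjugate H\"older exponent and rebalance the power, producing the factor $(\mu_\psi(V')\mu_\theta(W'))^{1/q'}$ in the target inequality.

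\textbf{Step 4 and main obstacle.} A final split of each $F$-factor into small primes ($p\le p_0(\varepsilon)$) and large primes absorbs the former into the constant $1000^{P_{\psi,\theta}(\varepsilon)}$ --- which is exactly the combinatorial/analytic quantity that this constant in the target bound is designed to absorb --- and controls the latter by an absolutely convergent Euler product once $p_0(\varepsilon)$ is taken large enough in terms of $\varepsilon$ and $\eta$. The delicate step is Step~3: choosing the H\"older exponent as $q' = 2/(1+2\varepsilon)<2$, rather than the naive $q=2$ from Cauchy--Schwarz, is what produces the improvement of the exponent on the right-hand side from $1/2$ to $1/2+\varepsilon$; with $q=2$ we would reach only (a) with $\varepsilon=0$, which is not a contradiction. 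The extra $\varepsilon$ must be squeezed out by combining the Rankin weight with a careful convexity step (sieve weights concentrate on edges with larger anatomy), and verifying that the degree lower bound from (b) remains tight after the shift from exponent $2$ to $q'$ is the single point at which all the constants need to line up.
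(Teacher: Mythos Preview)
Your Step~1 is correct and matches the paper: property (c) gives the pairwise-coprime factorisation $v=Na_v/b_v$, $w=Na_w/b_w$, and the edge conditions become $\psi(v)\le 1/(a_wb_v)$, $\theta(w)\le 1/(a_vb_w)$, together with the large-prime condition on $a_va_wb_vb_w$.

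The proposal breaks down at Step~3, and for a structural reason rather than a technicality. The bounds $\psi(v)\le 1/(a_wb_v)$ and $\theta(w)\le 1/(a_vb_w)$ are \emph{cross-coupled}: the bound on $\psi(v)$ depends on the partner $w$ and vice versa. Your claim that ``the coprime anatomy then splits the double sum into a product of single-variable sums'' is false over a general edge set $\mathcal{E}'\subsetneq V'\times W'$; enlarging to $V'\times W'$ loses exactly the inequality $\psi(v)\theta(w)\le 1/(a_va_wb_vb_w)$ you need. The paper resolves this with a device you do not mention: it fixes $w_0\in W'$ maximising $a_{w_0}$, and for each $w$ fixes $v_0(w)\in\Gamma_{\mathcal{E}'}(w)$ maximising $a_{v_0(w)}$; then $\psi(v)\le 1/(b_v a_{w_0})$ for all $v\in\Gamma_{\mathcal{E}'}(w_0)$ and $\theta(w)\le 1/(b_w a_{v_0(w)})$ for all $w$, which decouples the variables. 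Property (b) is then applied twice to lower-bound $\sum_{v\in\Gamma_{\mathcal{E}'}(w_0)}\mu_\psi(v)\mu_\theta(\Gamma_{\mathcal{E}'}(v))$ by $\mu_{\psi,\theta}(\mathcal{E}')^2/((q')^2\mu_\psi(V')\mu_\theta(W'))$, turning an upper bound on that restricted sum into the desired upper bound on $\mu_{\psi,\theta}(\mathcal{E}')$. Your description of how (b) ``absorbs the conjugate H\"older exponent'' does not correspond to any concrete inequality.

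You also misdiagnose the role of the exponent. You assert that an exponent-$1/2$ bound ``is not a contradiction'' and that one must run H\"older with $q'$ to squeeze out the extra $\varepsilon$. The paper does the opposite: it proves only the square-root bound
\[
\mu_{\psi,\theta}(\mathcal{E}')\ \ll\ \bigl(\mu_\psi(V')\mu_\theta(W')\,e^{-25Ct}\bigr)^{1/2},
\]
but with the much stronger saving $e^{-25Ct}$ (coming from pigeonholing $C$ onto one of $a_v,b_v,a_w,b_w$ and applying the anatomy lemmas, which themselves encode your Rankin trick). The passage from exponent $1/2$ to $1/2+\varepsilon$ is then a one-line interpolation with the trivial bound $\mu_{\psi,\theta}(\mathcal{E}')\le\mu_\psi(V')\mu_\theta(W')$: if $\mu_\psi(V')\mu_\theta(W')$ is tiny the trivial bound already contradicts (a), and otherwise the surplus in $e^{-25Ct}$ versus $e^{-Ct}$ covers the $\varepsilon$-loss. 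So your ``main obstacle'' is not an obstacle at all, and the attempt to force $q'$ directly into a H\"older step is both unexecuted and unnecessary. Finally, the constant $1000^{P_{\psi,\theta}(\varepsilon)}$ is not there to absorb small primes of your weight $F$ (which only sees primes $\ge t$ anyway); it is simply a large constant on the side of (a) that makes the contradiction easier.
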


    \subsubsection{Proof of Proposition \ref{Prop:structureofminimalcounterexample}}

   Take $\psi,\theta,\varepsilon,C,t,\mathcal{E},V,W$ as in the statement of Proposition \ref{Prop:structureofminimalcounterexample}, i.e.\! forming a minimal counterexample to Theorem \ref{Theorem:maintheorem}. For a fixed prime $p \in \mathcal{P}_{\psi,\theta}$, we now partition $V$ and $W$ depending on whether $p$ divides the elements or not (recall the simplifying square-free assumption we made): Let
   \begin{align*}V_{1}&:= \{v \in V: \, p\mid v\},\quad \hspace{4mm}V_{0}:= \{v \in V: \, p\nmid v\},\\
   W_{1}&:= \{w \in W: \, p\mid w\},\quad W_{0}:= \{w \in W: \, p\nmid w\}.\end{align*}
   
   For $i,j \in \{0,1\}$, we now define
   \[ m_p(i,j): = \frac{\mu_{\psi,\theta}(\mathcal{E} \cap (V_i \times W_j))}{\mu_{\psi,\theta}(\mathcal{E})}\] and observe that by $m_p(i,j) \geqslant 0$ and \[ \sum_{i,j \geqslant 0} m_p(i,j) = 1,\] $m_p$ defines a probability measure on $\{0,1\}^2$. 

Using the minimality assumption on $\mathcal{E}$, we now upper-bound $m_p(i,j)$ by a bilinear expression with exponential decay away from the diagonal $i=j$.\\

\begin{Lemma}[Bilinear upper bound]
\label{Lemma:bilinearbound}
Let $\psi,\theta,\varepsilon,C,t,\mathcal{E},m_p(i,j)$ be as above, and define H\"older conjugates $q,q'$ (i.e. $1/q + 1/q' =1$) with $q = 2+ \varepsilon$.
Writing for $i,j \geqslant 0$, $\alpha_i: = \mu_{\psi}(V_i)/ \mu_{\psi}(V)$ and $\beta_j: = \mu_{\theta}(W_j)/\mu_{\theta}(W)$, we have
\begin{equation}\label{bilinear_bound_prime} m_p(i,j) \leqslant 1000^{-\mathds{1}_{[p \leqslant p_0(\varepsilon)]}}  p^{-\frac{\vert i-j\vert}{q}}   (\alpha_i \beta_j e^{ \mathds{1}_{[i \neq j]}}) ^{\frac{1}{q'}}.\end{equation}
\end{Lemma}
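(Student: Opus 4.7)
The plan is to exploit the minimality of $(\psi,\theta)$ in the hypothesis of Proposition \ref{Prop:structureofminimalcounterexample}: for any alternative pair $(\psi^*,\theta^*)$ with $\mathcal{P}_{\psi^*,\theta^*}\subsetneq\mathcal{P}_{\psi,\theta}$, the conclusion of Theorem \ref{Theorem:maintheorem} applies to it as an induction hypothesis. For each $(i,j)\in\{0,1\}^2$, I build a modified graph $(\psi^*,\theta^*,\mathcal{E}^*)$ whose prime set omits $p$, apply the induction hypothesis, then divide by the lower bound
\[\mu_{\psi,\theta}(\mathcal{E}) > 1000^{P_{\psi,\theta}(\varepsilon)}\bigl(\mu_\psi(V)\mu_\theta(W)e^{-Ct}\bigr)^{1/q'}\]
(with $1/q'=\tfrac{1}{2}+\varepsilon$, matching the exponent in Theorem \ref{Theorem:maintheorem}) guaranteed by the assumed failure of \eqref{eq:maintheoremconclusion}. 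The factor $1000^{-\mathds{1}_{[p\leq p_0(\varepsilon)]}}$ then appears automatically because $P_{\psi^*,\theta^*}(\varepsilon) = P_{\psi,\theta}(\varepsilon) - \mathds{1}_{[p\leq p_0(\varepsilon)]}$ once $p$ is removed from the prime set.

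For the diagonal cases, when $i=j=0$ I simply restrict $(\psi,\theta)$ to $(V_0,W_0)$; the containment $\mathcal{E}\cap(V_0\times W_0)\subset\mathcal{E}^{t,C}_{\psi^*,\theta^*}$ is inherited from $\mathcal{E}$, and induction gives $m_p(0,0)\leq 1000^{-\mathds{1}_{[p\leq p_0]}}(\alpha_0\beta_0)^{1/q'}$. When $i=j=1$, I send $(v,w)\mapsto(v/p,w/p)$ and set $\psi^*(v/p):=\tfrac{p-1}{p}\psi(v)$, $\theta^*(w/p):=\tfrac{p-1}{p}\theta(w)$; squarefreeness combined with $\varphi(pn)=(p-1)\varphi(n)$ for $p\nmid n$ gives $\mu_{\psi^*}(v/p)=\mu_\psi(v)$, a direct computation yields $D_{\psi^*,\theta^*}(v/p,w/p)=\tfrac{p-1}{p}D_{\psi,\theta}(v,w)\leq 1$, and the identity $\frac{(v/p)(w/p)}{\gcd(v/p,w/p)^2}=\frac{vw}{\gcd(v,w)^2}$ leaves the prime-sum condition intact with the same parameter $C$. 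Induction then yields $m_p(1,1)\leq 1000^{-\mathds{1}_{[p\leq p_0]}}(\alpha_1\beta_1)^{1/q'}$, matching the claim on the diagonal where $p^{-|i-j|/q}=e^{\mathds{1}_{[i\neq j]}}=1$.

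The heart of the proof is the off-diagonal case $i=0,j=1$ (the case $i=1,j=0$ is identical under swapping the roles of $(\psi,V)$ and $(\theta,W)$). Map $(v,w)\mapsto(v,w/p)$ and set $\psi^*(v):=p\,\psi(v)\mathds{1}_{V_0}(v)$, $\theta^*(w/p):=\tfrac{p-1}{p}\theta(w)$. The extra factor of $p$ in $\psi^*$ is admissible because the computation $(w/p)\psi^*(v)=w\psi(v)\leq\gcd(v,w)=\gcd(v,w/p)$ shows that $D_{\psi^*,\theta^*}(v,w/p)\leq 1$ is preserved. Moreover, since $p\mid w$ but $p\nmid v$, the prime $p$ appears exactly once in $vw/\gcd(v,w)^2$, so $v(w/p)/\gcd(v,w/p)^2$ loses this prime, giving $\mathcal{E}^*\subset\mathcal{E}^{t,\,C^*}_{\psi^*,\theta^*}$ with $C^*=C-\tfrac{1}{p}\mathds{1}_{[p\geq t]}$. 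The rescaling produces $\mu_{\psi^*,\theta^*}(\mathcal{E}^*)=p\,\mu_{\psi,\theta}(\mathcal{E}\cap(V_0\times W_1))$ and $\mu_{\psi^*}(V_0)=p\,\mu_\psi(V_0)$; applying induction and dividing by the counterexample lower bound on $\mu_{\psi,\theta}(\mathcal{E})$ yields the factor $p^{1/q'-1}=p^{-1/q}$ together with $e^{t/(pq')}$, and $e^{t/(pq')}\leq e^{1/q'}$ whenever $p\geq t$ (when $p<t$ no shift is needed and the $e^{1/q'}$ factor can be inserted trivially). This gives the claimed bound with $|i-j|=1$ and $\mathds{1}_{[i\neq j]}=1$.

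The main technical obstacles will be the careful weight-identity bookkeeping under the substitutions $v\mapsto v/p$, $w\mapsto w/p$ (leveraging squarefreeness through $\varphi(pn)=(p-1)\varphi(n)$), verifying that $D_{\psi^*,\theta^*}\leq 1$ survives the rescaling of $\psi^*$ by $p$ in the off-diagonal case, and the clean split of the parameter shift $C\to C-1/p$ according to whether $p\geq t$ or $p<t$; once these are in place, the four bounds assemble into the lemma.
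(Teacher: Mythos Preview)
Your proposal is correct and follows essentially the same route as the paper: remove the prime $p$ by a change of variables to obtain a pair $(\psi^*,\theta^*)$ with $\mathcal{P}_{\psi^*,\theta^*}\subsetneq\mathcal{P}_{\psi,\theta}$, apply the minimality hypothesis to bound $\mu_{\psi^*,\theta^*}(\mathcal{E}^*)$ from above, and divide by the counterexample lower bound \eqref{eq:lowerboundonE} on $\mu_{\psi,\theta}(\mathcal{E})$. The only cosmetic differences are that the paper packages all four cases into a single formula $\widetilde{\psi_{i,j}}(v)=p^{j-\min(i,j)}\psi(p^iv)$ rather than treating them one by one, and it uses the slightly cruder shift $C\to C-\tfrac{1}{t}\mathds{1}_{[i\neq j]}$ in place of your $C\to C-\tfrac{1}{p}\mathds{1}_{[p\geq t]}$; both land on the same bound since $t/p\leq 1$ when $p\geq t$.
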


\begin{proof}
We upper-bound $m_p(i,j)$ by bounding $\mu_{\psi,\theta}(\mathcal{E})$ from below and $\mu_{\psi,\theta}(\mathcal{E} \cap (V_i \times W_j))$ from above. The na\"{i}ve combination of these bounds will control $m(i,j)$ as required. 

Indeed, since $\mathcal{E}$ is a counterexample to Theorem \ref{Theorem:maintheorem} we have \begin{equation}
\label{eq:lowerboundonE}
\mu_{\psi,\theta}(\mathcal{E}) \geqslant 1000^{P_{\psi,\theta}(\varepsilon)}( \mu_{\psi}(V) \mu_{\theta}(W) e^{-Ct})^{\frac{1}{2} + \varepsilon}.
\end{equation} In order to bound $\mu_{\psi,\theta}(\mathcal{E} \cap (V_i \times W_j))$ from above we will remove the influence of the prime $p$ and then use the minimality assumption. The main barrier is notational.  Define $\widetilde{\psi_{i,j}}, \widetilde{\theta_{i,j}}: \mathbb{N} \longrightarrow \mathbb{R}_{\geqslant 0}$ by \[\widetilde{\psi_{i,j}}(v) := \begin{cases}
p^{j - \min(i,j)}\psi(p^i v) & \text{if } \gcd(p,v) = 1, \\
0 & \text{if } p \vert v, \end{cases}\] and\[\widetilde{\theta_{i,j}}(w) := \begin{cases}
p^{i - \min(i,j)}\theta(p^jw) & \text{if } \gcd(p,w) = 1, \\
0 & \text{if } p \vert w, \end{cases}\] with
 \begin{align*}
\widetilde{\mathcal{E}_{i,j}} := \{(v,w): (p^i v, p^j w) \in \mathcal{E} \cap (V_i \times W_j)\},\quad \widetilde{V_i}:= \{ v: \, p^iv \in V_i\},\quad
\widetilde{W_j}:= \{w: \, p^j w \in W_j\}.
\end{align*} 

Elementary calculations show

\[\mu_{\widetilde{\psi_{i,j}}}(\widetilde{V_i}) = p^{j- \min(i,j)} \Big(\frac{p^i}{\varphi(p^i)}\Big)\mu_{\psi}(V_i), \quad \mu_{\widetilde{\theta_{i,j}}}(\widetilde{W_j}) = p^{i - \min(i,j)} \Big( \frac{p^j}{\varphi(p^j)}\Big)\mu_{\theta}(W_j),\]

as well as
\begin{align*}
\mu_{\psi,\theta}(\mathcal{E} \cap (V_i \times W_j))
&=\Big( \frac{ \varphi(p^i) \varphi(p^j)}{p^{i+j}}\Big)p^{-\vert i-j\vert} \mu_{\widetilde{\psi_{i,j}}, \widetilde{\theta_{i,j}}}(\widetilde{\mathcal{E}_{i,j}}).
\end{align*}
Further, a quick check shows us that $\widetilde{\mathcal{E}_{i,j}} \subset \mathcal{E}^{t, C - \frac{\mathds{1}_{[i \neq j]}}{t}}_{\widetilde{\psi_{i,j}}, \widetilde{\theta_{i,j}}}$ and $\mathcal{P}_{\widetilde{\psi_{i,j}}, \widetilde{\theta_{i,j}}} = \mathcal{P}_{\psi,\theta} \setminus \{p\}$. Hence $\widetilde{\mathcal{E}_{i,j}}$ is of the type that is considered in Theorem \ref{Theorem:maintheorem} with one less prime considered, enabling us to use the minimality assumption on $\mathcal{E}$ to upper-bound the contribution from $\widetilde{\mathcal{E}_{i,j}}$.
Therefore, by the minimality assumption on $\vert \mathcal{P}_{\psi,\theta}\vert$, the bound \eqref{eq:maintheoremconclusion} holds for $\mu_{\widetilde{\psi_{i,j}}, \widetilde{\theta_{i,j}}}(\widetilde{\mathcal{E}_{i,j}})$, which provides by the above equations an upper bound for $\mu_{\psi,\theta}(\mathcal{E} \cap (V_i \times W_j))$.
Combining this with the lower bound on $\mu_{\psi,\theta}(\mathcal{E})$ from equation \eqref{eq:lowerboundonE}, we obtain 
\eqref{bilinear_bound_prime}.
\end{proof}

Having the bilinear bound \eqref{bilinear_bound_prime} on $m_p(i,j)$ established, 
the factor $1000^{-\mathds{1}_{[p \leqslant p_0(\varepsilon)]}}$ in \eqref{bilinear_bound_prime} allows us to neglect all primes $p \leqslant p_0(\varepsilon)$. Further, from \eqref{bilinear_bound_prime} 
one can deduce by several usages of the Cauchy--Schwarz and H\"older inequality (see \cite[Lemma 3.2]{HSW} or \cite[Lemma 2.1]{GW21} for details) that we have a strong concentration point of the measure in the following sense:
There exists $k_p \in \{0,1\}$ such that
\[\sum\limits_{i \neq k_p, j \neq k_p} m_p(i,j) \ll p^{-1 -2\varepsilon} + p^{-11/10}.\]
By considering all primes $p \in \mathcal{P}_{\psi,\theta}$ (that are at least $p_0(\varepsilon)$), and taking the union bound, we conclude that 
\begin{align*}
&\mu_{\psi,\theta}( \{(v,w) \in \mathcal{E}: \, \exists p \in \mathcal{P}_{\psi,\theta} \, \text{s.t.} \, \vert \nu_p(v) - k_p \vert + \vert \nu_p(w) - k_p\vert \geqslant 2\})\nonumber\\
& \ll_{\varepsilon} \mu_{\psi, \theta}(\mathcal{E}) \sum_{p > p_0(\varepsilon)} (p^{-1 - 2 \varepsilon} + p^{-\frac{11}{10}}).\nonumber
\end{align*}
Since $p_0(\varepsilon)$ is sufficiently large, the converging sum above can be chosen to be $< 1/2$, so we obtain
\begin{equation}
\label{eq:removingbadpairs}
\mu_{\psi,\theta}( \{(v,w) \in \mathcal{E}: \, \exists p \in \mathcal{P}_{\psi,\theta} \, \text{s.t.} \, \vert \nu_p(v) - k_p \vert + \vert \nu_p(w) - k_p\vert \geqslant 2\}) <\frac{\mu_{\psi, \theta}(\mathcal{E})}{2}. 
\end{equation}

In other words, setting $N := \prod_{p \in \mathcal{P}_{\psi,\theta}}p^{k_p}$, we have
that at least $50\%$ of the measure is concentrated on pairs $(v,w) \in \mathcal{E}$ where the $p$-adic valuations satisfy \eqref{padic_structure}.
So we define \[ \mathcal{E}^{*}: = \{(v,w) \in \mathcal{E}: \, \text{for all primes } p, \, \vert \nu_p(v/N)\vert + \vert \nu_p(w/N)\vert \leqslant 1\},\]
$V^*: = \mathcal{E}^*|_V$, $W^*: = \mathcal{E}^*|_W$. Note that \eqref{eq:removingbadpairs} implies 
\begin{align*}
\mu_{\psi,\theta}(\mathcal{E}^{*}) \geqslant \frac{\mu_{\psi, \theta}(\mathcal{E})}{2} &> \frac{1}{2} \cdot 1000^{P_{\psi,\theta}(\varepsilon)}( \mu_{\psi}(V) \mu_{\theta}(W) e^{-Ct})^{\frac{1}{q^\prime}} \\
& \geqslant \frac{1}{2} \cdot 1000^{P_{\psi,\theta}(\varepsilon)}( \mu_{\psi}(V^{*}) \mu_{\theta}(W^{*}) e^{-Ct})^{\frac{1}{q^\prime}}.
\end{align*}
Now define $\mathcal{E}^{\prime} \subset \mathcal{E}^{*}$ to be minimal (with respect to inclusion) 
such that property (a) of Proposition \ref{Prop:structureofminimalcounterexample} is satisfied. By construction, we have that $\mathcal{E}^{\prime}$ exists (since $\mathcal{E}^*$ is finite and satisfies property (a) itself). $\mathcal{E}^{\prime}$ also satisfies property (c) (since $\mathcal{E}^*$ satisfies property (c)). The minimality assumption of 
$\mathcal{E}^{\prime}$ now gives us the remaining property (b) by the ``popularity principle'': Assuming (b) not to hold, then there must exist an element $v \in V^{\prime}$ (the argument for $W^{\prime}$ is the same) for which 
\begin{equation*}
\mu_{\theta}(\Gamma_{\mathcal{E}^{\prime}}(v)) < \frac{1}{q^\prime} \frac{\mu_{\psi,\theta}(\mathcal{E}^{\prime})}{\mu_{\psi}(V^{\prime})}.
\end{equation*}
Removing the vertex $v$ from the graph, we can check straightforwardly that for the arising sub-graph, both properties (a) and (c) remain true, a contradiction to the minimality of $\mathcal{E}^{\prime}$.

 \subsubsection{Proof of Proposition \ref{Prop:resolvingminimalcounterexample}}

We need the following two auxiliary ingredients that can be seen as statements on the ``anatomy of integers'', and are of the form of standard averaging techniques in analytic number theory we mentioned on p. \pageref{avg_techn}. The proof methodology here is not novel (they follow from applying standard tricks in analytic number theory such as the exponential Markov inequality/the exponential Rankin trick), so we omit them here. The interested reader might consult \cite[Lemma 10]{ABH23} and \cite[Lemma 4.2]{HSW} for the proofs.

\begin{Lemma}[Unweighted anatomy property]
\label{Lemma:unweightedanatomy}
For any real $x,t \geqslant 1$ and $c \in \mathbb{R}$, 
\[\# \Big\{n \leqslant x: \, \sum\limits_{\substack{p \geqslant t \\ p \vert n}} \frac{1}{p} \geqslant c \Big\} \ll x e^{-100ct}.\]
\end{Lemma}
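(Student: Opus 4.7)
The plan is to apply the exponential Markov (Chernoff-type) inequality together with the Rankin trick, two standard tools of analytic number theory for controlling the anatomy of integers. Setting $f(n) := \sum_{p \geq t,\, p \mid n} 1/p$ and observing that the claim is trivial when $ct \leq 0$, I may assume $c > 0$. For any parameter $\lambda > 0$, Markov's inequality applied to the non-negative quantity $e^{\lambda f(n)}$ gives
\[
\#\{n \leq x : f(n) \geq c\} \;\leq\; e^{-\lambda c}\sum_{n \leq x} e^{\lambda f(n)}.
\]

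The main computation is to evaluate the resulting exponential moment. First, write $e^{\lambda f(n)} = \prod_{p \geq t,\, p \mid n}e^{\lambda/p}$ and expand each factor as $1 + (e^{\lambda/p}-1)$; the Rankin trick is then to swap the order of summation, which reduces the left-hand side to a sum over squarefree integers $d$ supported on primes $\geq t$ with $d \mid n$, weighted by $\prod_{p \mid d}(e^{\lambda/p}-1)$. Using the trivial bound $\#\{n \leq x : d \mid n\} \leq x/d$ and converting the sum over $d$ back into an Euler product, I obtain
\[
\sum_{n \leq x}e^{\lambda f(n)} \;\leq\; x\prod_{p \geq t}\left(1 + \frac{e^{\lambda/p}-1}{p}\right) \;\leq\; x\exp\!\left(\sum_{p \geq t}\frac{e^{\lambda/p}-1}{p}\right),
\]
where the final inequality uses $\log(1+z) \leq z$.

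The last step is to calibrate $\lambda$. I would choose $\lambda := 100\,t$, which forces $\lambda/p \leq 100$ uniformly in $p \geq t$; on this bounded range the Taylor estimate $e^{y}-1 = O(y)$ yields $(e^{\lambda/p}-1)/p \ll \lambda/p^{2}$, whence
\[
\sum_{p \geq t}\frac{e^{\lambda/p}-1}{p} \;\ll\; \lambda\sum_{p \geq t}\frac{1}{p^{2}} \;\ll\; \frac{\lambda}{t} \;=\; O(1).
\]
Combining everything produces $\#\{n \leq x : f(n) \geq c\} \ll x\,e^{-\lambda c + O(1)} = O\!\left(x\,e^{-100\,c\,t}\right)$, as required. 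The only delicate point is the calibration of $\lambda$: it must be large enough (of order $t$) to extract the desired decay in $c$, yet small enough (also of order $t$) for $\lambda/p$ to remain bounded on $p \geq t$, so that the Euler product stays controlled. The choice $\lambda = 100t$ secures both constraints simultaneously, and the constant $100$ appearing in the statement is nothing more than this calibration made explicit.
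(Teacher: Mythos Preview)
Your argument is correct and is precisely the ``exponential Markov inequality / exponential Rankin trick'' that the paper invokes (the paper omits the proof and refers to \cite[Lemma 10]{ABH23} and \cite[Lemma 4.2]{HSW}, which proceed exactly as you do). The calibration $\lambda = 100t$ and the bound $\sum_{p\geq t}p^{-2}\ll 1/t$ are the only ingredients beyond the Chernoff step, and you handle both cleanly.
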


\begin{Lemma}[Divisor anatomy property]
\label{Lemma:divisoranatomy}
For any $M \in \mathbb{N}$, real $t \geqslant 1$ and $c \in \mathbb{R}$, \[\sum\limits_{\substack{mn = M \\ \sum\limits_{\substack{p \geqslant t \\ p\vert m}} \frac{1}{p} \geqslant c}} \varphi(n) \ll Me^{-100ct}.\]
\end{Lemma}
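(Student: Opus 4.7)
The plan is to use the arithmetic structure (c) to rewrite every pair in $\mathcal{E}^{\prime}$ in coordinates adapted to $N$, then invoke the anatomy lemmas to extract an exponential saving, and finally convert a pointwise estimate into a bound on the total mass via property (b). This will yield an upper bound on $\mu_{\psi,\theta}(\mathcal{E}^{\prime})$ of the exact shape forbidden by property (a).

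Concretely, under the standing squarefree hypothesis, (c) lets me write $v = Nm_v/b_v$ and $w = Nm_w/b_w$ with $b_v, b_w \mid N$, $\gcd(b_v,b_w) = 1$, and squarefree $m_v, m_w$ coprime to $N$ with $\gcd(m_v,m_w) = 1$. A short calculation gives $\gcd(v,w) = N/(b_v b_w)$ and $vw/\gcd(v,w)^2 = b_v b_w m_v m_w$ with the four factors pairwise coprime. The overlap inequality $D_{\psi,\theta}(v,w)\le 1$ therefore collapses to the pointwise bounds $\psi(v) \le 1/(b_v m_w)$ and $\theta(w) \le 1/(b_w m_v)$, while the defining condition for $\mathcal{E}^{t,C}_{\psi,\theta}$ splits additively as
\[\sum_{\substack{p\ge t,\ p\mid m_vb_v}}\frac{1}{p} \ +\ \sum_{\substack{p\ge t,\ p\mid m_wb_w}}\frac{1}{p} \ \ge\ C.\]
Pigeonholing yields $\mathcal{E}^{\prime\prime}\subseteq \mathcal{E}^{\prime}$ of at least half the mass on which, say, $\sum_{p\ge t,\, p\mid m_wb_w} 1/p \ge C/2$.

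For each $v\in V^{\prime}$, substituting $\theta(w)\le 1/(b_w m_v)$ and the multiplicative factorization of $\varphi(w)/w$ into $\mu_\theta(\Gamma_{\mathcal{E}^{\prime\prime}}(v))$ reduces the marginal to a sum over $(m_w, b_w)$ in which the product $m_w b_w$ has large-prime reciprocal-sum at least $C/2$ and $m_w \le 1/(\psi(v)b_v)$. Lemma \ref{Lemma:divisoranatomy}, applied with $M = m_w b_w$, then produces an upper bound of the form $\mu_\theta(\Gamma_{\mathcal{E}^{\prime\prime}}(v)) \ll_\varepsilon U(v)\, e^{-\gamma Ct}$ with $\gamma > 1/q^\prime$ as large as we like. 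Combining this with the lower bound $\mu_\theta(\Gamma_{\mathcal{E}^{\prime}}(v)) \ge q^{\prime-1}\mu_{\psi,\theta}(\mathcal{E}^{\prime})/\mu_\psi(V^{\prime})$ from (b), summing against $\mu_\psi$ on $v$, using Lemma \ref{Lemma:unweightedanatomy} to discard the negligible contribution from $v$ with very composite $m_vb_v$, and interpolating via H\"older's inequality between this estimate and the trivial bound $\mu_\theta(\Gamma_{\mathcal{E}^{\prime\prime}}(v))\le \mu_\theta(W^{\prime})$ (together with the symmetric $v\leftrightarrow w$ argument for the other pigeonhole case), one extracts
\[\mu_{\psi,\theta}(\mathcal{E}^{\prime}) \ \le\ \tfrac{1}{2}\cdot 1000^{P_{\psi,\theta}(\varepsilon)}\bigl(\mu_\psi(V^{\prime}) \mu_\theta(W^{\prime}) e^{-Ct}\bigr)^{1/q^\prime}\]
provided $p_0(\varepsilon)$ is large enough that the prefactor absorbs all implicit constants; this directly contradicts property (a).

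The technical crux will be this final H\"older interpolation: the anatomy bound naturally carries exponent $1$ on the $v$-marginal and $0$ on the $w$-marginal, whereas (a) demands the symmetric exponent $1/q^\prime = 1/2+\varepsilon$ on both. Balancing the exponents via H\"older against the trivial $w$-marginal bound (and its $v$-side analogue from the other pigeonhole case) while respecting the prefactor $1000^{P_{\psi,\theta}(\varepsilon)}$ is the delicate part; by contrast the exponential saving $e^{-\gamma Ct}$ from Lemma \ref{Lemma:divisoranatomy} is comfortably strong enough to meet the target $e^{-Ct/q^\prime}$ once the correct balance is identified.
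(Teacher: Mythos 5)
Your proposal does not address the stated Lemma~\ref{Lemma:divisoranatomy} at all. What you have sketched is the argument for Proposition~\ref{Prop:resolvingminimalcounterexample} (the resolution of the minimal potential counterexample): the objects $\mathcal{E}^{\prime}$, properties (a), (b), (c), the integer $N$, the neighbourhood marginals $\mu_{\theta}(\Gamma_{\mathcal{E}^{\prime}}(v))$, and the contradiction with property (a) all belong to that proposition, not to the divisor-sum estimate you were asked to prove. Worse, your proposal explicitly \emph{invokes} Lemma~\ref{Lemma:divisoranatomy} as an ingredient (``Lemma~\ref{Lemma:divisoranatomy}, applied with $M=m_wb_w$, then produces an upper bound\ldots''), so as a proof of that same lemma it would be circular.

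Lemma~\ref{Lemma:divisoranatomy} is a self-contained, elementary estimate whose proof is a one-step Rankin/exponential-Markov argument, which the paper omits with a pointer to the literature. For $c\le 0$ the bound is trivial since $\sum_{n\mid M}\varphi(n)=M$. For $c>0$, insert the weight $\exp\bigl(100t\sum_{p\ge t,\,p\mid m}1/p-100ct\bigr)\ge 1$ on the constrained terms to get
\[
\sum_{\substack{mn=M\\ \sum_{p\ge t,\,p\mid m}1/p\ge c}}\varphi(n)\ \le\ e^{-100ct}\sum_{mn=M}\varphi(n)\prod_{\substack{p\ge t\\ p\mid m}}e^{100t/p}.
\]
The right-hand sum is multiplicative in $M$; for a squarefree $M$, each prime $p\mid M$ with $p\ge t$ contributes the factor $(p-1)+e^{100t/p}\le p\bigl(1+\tfrac{100t}{p^2}e^{100t/p}\bigr)$, while primes $p<t$ contribute exactly $p$. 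Since $t\ge 1$ and $p\ge t$ force $100t/p\le 100$, the correction factors multiply to at most $\exp\bigl(100t\,e^{100}\sum_{p\ge t}p^{-2}\bigr)\ll \exp\bigl(200e^{100}\bigr)$, an absolute constant, giving $\ll Me^{-100ct}$ as claimed. This is what you need to supply; the Proposition~\ref{Prop:resolvingminimalcounterexample} outline you wrote, while broadly consistent with the paper's strategy, answers a different question.
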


\begin{proof}[Proof of Proposition \ref{Prop:resolvingminimalcounterexample}]
Let $\psi,\theta,\varepsilon,C,t,\mathcal{E}^{\prime}$ be as in the statement of Proposition \ref{Prop:resolvingminimalcounterexample}, and $N$ be the natural number from property (c) of Proposition \ref{Prop:structureofminimalcounterexample} that $\mathcal{E}^{\prime}$ satisfies. We will prove that 
\begin{equation}
\label{eq:halfbound}
\mu_{\psi,\theta}(\mathcal{E}^{\prime}) \ll (\mu_{\psi}(V^{\prime}) \mu_{\theta}(W^{\prime}) e^{-25 Ct})^{\frac{1}{2}}.
\end{equation}

To ``smuggle in'' the additional power of $\varepsilon$ in order to show \[\mu_{\psi,\theta}(\mathcal{E}^{\prime}) \leq (\mu_{\psi}(V^{\prime}) \mu_{\theta}(W^{\prime}) e^{-Ct})^{\frac{1}{2} + \varepsilon},\]
one only needs to show that $\mu_{\psi}(V^{\prime}) \mu_{\theta}(W^{\prime})$ is not ``too small'' - in which case even the trivial bound $\mu_{\psi,\theta}(\mathcal{E}^{\prime}) \leqslant \mu_{\psi}(V^{\prime})\mu_{\theta}(W^{\prime})$ contradicts assumption (a) of Proposition \ref{Prop:structureofminimalcounterexample}. So one is left to show \eqref{eq:halfbound}.

 In order to do this, we define for each $(v,w) \in V^{\prime} \times W^{\prime}$
\begin{align*}
v^{-} &:= \frac{N}{\gcd(N,v)}, \quad v^{+} := \frac{v}{\gcd(N,v)}, \quad\\
w^{-} &:= \frac{N}{\gcd(N,w)}, \quad w^{+} := \frac{w}{\gcd(N,w)}.
\end{align*}
Since $\mathcal{E}^{\prime}$ satisfies property (3), if $(v,w) \in \mathcal{E}^{\prime}$ we have (recall our square-free assumption) that all four of $v^-, v^+, w^-, w^+$ are pairwise coprime, as well as $v^+,w^+$ being coprime to $N$,
and \[v = N\frac{v^+}{v^-}, \qquad w = N \frac{w^+}{w^-}, \quad \frac{vw}{\gcd(v,w)^2} = v^-v^{+}w^{-}w^{+}.
\] Therefore, as $(v,w) \in \mathcal{E}^{t,C}_{\psi,\theta}$, \[ \psi(v) \leqslant \frac{ \gcd(v,w)}{w} = \frac{1}{v^{-} w^{+}}, \qquad \theta(w) \leqslant \frac{\gcd(v,w)}{v} = \frac{1}{v^{+} w^{-}}.\] Let $w_0 \in W^{\prime}$ maximize $w_0^+$, and let $v_0(w) \in \Gamma_{\mathcal{E}^{\prime}}(w)$ maximize $v_0^+(w)$ (over $\Gamma_{\mathcal{E}^{\prime}}(w)$). 
Then the above inequalities are sharpened by
\begin{equation}
\label{eq:upper_bound_psi}
\psi(v) \leqslant  \frac{1}{v^{-} w_0^{+}}, \qquad \theta(w) \leqslant \frac{1}{v^{+}_0(w) w^{-}}
\end{equation} for  $(v,w) \in \mathcal{E}^{\prime}$ with $(v,w_0) \in \mathcal{E}^{\prime}$.
By applying property (b) twice, we get
\begin{align}
\label{eq:qualitybounding}
\frac{\mu_{\psi,\theta}(\mathcal{E}^{\prime})}{(\mu_{\psi}(V^{\prime}) \mu_{\theta}(W^{\prime}))^{\frac{1}{2}}}
\leqslant q^\prime \Big( \sum\limits_{v \in \Gamma_{\mathcal{E}^{\prime}}(w_0)} \mu_{\psi}(v) \mu_{\theta}(\Gamma_{\mathcal{E}^{\prime}}(v))\Big)^{\frac{1}{2}},
\end{align}
and by \eqref{eq:upper_bound_psi} we obtain 
\begin{align*}
 \sum\limits_{v \in \Gamma_{\mathcal{E}^{\prime}}(w_0)} \mu_{\psi}(v) \mu_{\theta}(\Gamma_{\mathcal{E}^{\prime}}(v))& = \sum\limits_{v \in \Gamma_{\mathcal{E}^{\prime}}(w_0)} \frac{\varphi(v) \psi(v)}{v} \sum_{w \in \Gamma_{\mathcal{E}^{\prime}}(v)} \frac{ \varphi(w) \theta(w)}{w} \nonumber\\
&\leqslant \frac{1}{w_0^+} \sum_{w \in W^{\prime}} \frac{ \varphi(w)}{w w^-} \cdot \frac{1}{v_0^+(w)} \sum\limits_{v \in \Gamma_{\mathcal{E}^{\prime}}(w)} \frac{\varphi(v)}{v v^-}.
\end{align*}

Since this is a sum over all pairs $(v,w) \in \mathcal{E}^{\prime} \subset \mathcal{E}^{t,C}_{\psi,\theta}$, and $\frac{vw}{\gcd(v,w)^2} = v^- v^+ w^- w^+$, we know that \[ \sum\limits_{\substack{ p \geqslant t \\ p \vert v^- v^+ w^- w^+}} \frac{1}{p} \geqslant C.\] Hence $\sum\limits_{\substack{p \geqslant t: \, p \vert v^-}} \frac{1}{p} \geqslant \frac{C}{4}$ or similarly with $p\vert v^{+}$, $p\vert w^{-}$ or $p \vert w^+$. Therefore \[\sum\limits_{v \in \Gamma_{\mathcal{E}^{\prime}}(w_0)} \mu_{\psi}(v) \mu_{\theta}(\Gamma_{\mathcal{E}^{\prime}}(v))  \leqslant S_1 + S_2 + S_3 + S_4,\] where 
\begin{equation}
\label{eq:S1definition}
S_1 := \frac{1}{w_0^+} \sum_{w \in W^{\prime}} \frac{ \varphi(w)}{w w^-} \cdot \frac{1}{v_0^+(w)} \sum\limits_{\substack{v \in \Gamma_{\mathcal{E}^{\prime}} (w)\\ \sum\limits_{\substack{p \geqslant t \\ p \vert v^-}} \frac{1}{p} \geqslant \frac{C}{4}}} \frac{\varphi(v)}{v v^-},
\end{equation}
and $S_2, S_3, S_4$ are similar expressions with the ``anatomy'' condition placed on $p\vert v^+$, $p\vert w^-$, and $p\vert w^+$ respectively. \\

\textbf{Bounding $S_1$}: We bound the inner sum from \eqref{eq:S1definition}. Since $v = v^+ \cdot \frac{N}{v^-}$ and $v^+ \leqslant v_0^+(w)$, we can reparametrise $v$ in terms of variables $v^+$ and $v^-$ and obtain
\begin{align}
\label{eq:vplusvminus}
\sum\limits_{\substack{v \in \Gamma_{\mathcal{E}^{\prime}} (w)\\ \sum\limits_{\substack{p \geqslant t \\ p \vert v^-}} \frac{1}{p} \geqslant \frac{C}{4}}} \frac{\varphi(v)}{v v^-} &\leqslant \sum\limits_{\substack{v^+ \leqslant v_0^+(w)}} \sum\limits_{\substack{v^- \vert N \\ \gcd(v^-, v^+) = 1 \\ \sum\limits_{\substack{p\geqslant t \\ p \vert v^-}} \frac{1}{p} \geqslant \frac{C}{4}}} \frac{ \varphi(v^+ \cdot \frac{N}{v^-})}{Nv^+}.
\end{align}
Since $\varphi$ is multiplicative and $\varphi(n) \leq n$, we have 
\begin{align*}
\frac{ \varphi(v^+ \cdot \frac{N}{v^-})}{Nv^+} & =\frac{ \varphi(v^+)}{v^+}\cdot \frac{\varphi(\tfrac{N}{v^{-}})}{N}\leqslant \frac{\varphi(\frac{N}{v^-})}{N}.
\end{align*}
Thus, the right-hand side of \eqref{eq:vplusvminus} is by Lemma \ref{Lemma:divisoranatomy}
\begin{align}
\label{eq:similar_to_DS}
\leqslant \sum\limits_{v^+ \leqslant v_0^+(w)} \Big( \frac{1}{N} \sum\limits_{\substack{v^- \vert N \\ \sum\limits_{\substack{p\geqslant t \\ p \vert v^-}} \frac{1}{p} \geqslant \frac{C}{4}}} \varphi(\tfrac{N}{v^-})\Big)\leqslant  \sum\limits_{\substack{v^+ \leqslant v_0^+(w)}}  \Big( \frac{1}{N} \sum\limits_{\substack{mn = N \\ \sum\limits_{\substack{p \geqslant t \\ p \vert m}} \frac{1}{p} \geqslant \frac{C}{4}}} \varphi(n)\Big) \ll v_0^+(w) e^{-25Ct}.
\end{align}
 Therefore, by \eqref{eq:S1definition} we get \[S_1 \ll \frac{e^{-25Ct}}{w_0^+} \sum_{w \in W^{\prime}} \frac{\varphi(w)}{w w^-} \ll \frac{e^{-25Ct}}{w_0^+} \sum\limits_{\substack{w^+ \leqslant w_0^+}} \sum\limits_{\substack{w^- \vert N \\ \gcd(w^-, w^+) = 1}} \frac{\varphi(w^+ \cdot \frac{N}{w^-})}{N w^+} \ll e^{-25Ct},\] where the final step followed from the method we used to bound expression \eqref{eq:vplusvminus}, using $\sum_{d \mid n}\varphi(d) = n$ instead of Lemma \ref{Lemma:divisoranatomy}. Hence $S_1 \ll e^{-25Ct}$. \\

The bound of $S_2$ follows a similar path, with Lemma \ref{Lemma:unweightedanatomy} in place of Lemma \ref{Lemma:divisoranatomy}. By symmetry, the bounds of $S_3$ and $S_4$ follow in the exactly same ways,
which finally implies
$S_1 + S_2 + S_3 + S_4 \ll e^{-25 Ct}$. Substituting this bound into \eqref{eq:qualitybounding} and using $q^\prime < 2$, we have \[ \frac{\mu_{\psi,\theta}(\mathcal{E}^{\prime})}{(\mu_{\psi}(V^{\prime}) \mu_{\theta}(W^{\prime}))^{\frac{1}{2}}} \ll e^{-\frac{25}{2} Ct},\] resolving equation \eqref{eq:halfbound}, which in turn concludes the proof of Proposition \ref{Prop:resolvingminimalcounterexample}.
\end{proof}
 \phantom\qedhere
\end{proof}
\end{enumerate}

\subsection*{Acknowledgements}

The author thanks the Austrian Mathematical Society for awarding him the
ÖMG Studienpreis 2024 for his dissertation under the supervision of C. Aistleitner. Further, the author is grateful to C. Fuchs for inviting him to write this article for publication in IMN.\\

\bibliographystyle{plain}
\bibliography{bibliography.bib}
\end{document}